\DeclareMathOperator*{\dif}{d\!}
\newtheorem{thm}{Theorem}
\newtheorem{defn}[thm]{Definition}
\newtheorem{prop}[thm]{Proposition}
\newtheorem{remark}[thm]{Remark}
\author[Yang]{Tianyu Yang}
\address{Department of Computational Mathematics Science and Engineering, Michigan State University, East Lansing, MI 48824, USA}
\curraddr{}
\email{yangti27@msu.edu}
\author[Yang]{Yang Yang}
\address{Department of Computational Mathematics Science and Engineering, Michigan State University, East Lansing, MI 48824, USA}
\curraddr{}
\email{yangy5@msu.edu}
\thanks{The research of T. Yang and Y. Yang is partially supported by the NSF grants DMS-2006881, DMS-2237534, DMS-2220373, and the NIH grant R03-EB033521.
}
\title[UMBLT with Partial Data and Uncertain Optical Parameters]{The Diffusive Ultrasound Modulated Bioluminescence Tomography with Partial Data and Uncertain Optical Parameters}
\begin{document}

\begin{abstract}
The paper studies an imaging problem in the diffusive ultrasound-modulated bioluminescence tomography with partial boundary measurement in an anisotropic medium. Assuming plane-wave modulation, we transform the imaging problem to an inverse problem with internal data, and derive a reconstruction procedure to recover the bioluminescent source. Subsequently, an uncertainty quantification estimate is established to assess the robustness of the reconstruction. To facilitate practical implementation, we discretize the diffusive model using the staggered grid scheme, resulting in a discrete formulation of the UMBLT inverse problem. A discrete reconstruction procedure is then presented along with a discrete uncertainty quantification estimate.
Finally, the reconstruction procedure is quantitatively validated through numerical examples to demonstrate the efficacy and reliability of the proposed approach and estimates.
\end{abstract}

\maketitle

\section{Introduction and Problem Formulation}

Bioluminescence refers to production and emission of native light inside living organisms such as fireflies. Based on this phenomenon, Bio-Luminescence Tomography (BLT) is developed as a technology that utilizes bioluminescence sources as bio-medical indicators to image biological tissue. Specifically, biological entities or process components (e.g. bacteria, tumor cells, immune cells, or genes) are tagged in BLT with reporter genes that encode one of a number of light-generating enzymes (luciferases)~\cite{contag2002advances}.
By measuring the light generated by the luciferin-luciferase reaction, BLT aims to image the spatial distribution of the internal bioluminescence sources.

\medskip
\textbf{The Inverse Problem in Diffusive BLT.}
Let $\Omega$ represent the strongly-scattering biological tissue. We will assume $\Omega$ is a bounded connected open subset of $\mathbb{R}^n$ with smooth boundar $\partial\Omega$. The light propagates in a strongly-scattering medium as a diffuse wave~\cite{arridge2009optical}. The spatial photon density $\phi=\phi(x)$ of the wave is modeled by the following time-independent diffusion equation with the Robin-type boundary condition~\cite{bal2014ultrasound}:
\begin{align}
    -\nabla\cdot D(x) \nabla \phi(x) + \sigma_{a}(x) \phi(x) & = S(x)\quad &\text{ in } \Omega. \label{eq:DE} \\
    \phi(x) +\ell\nu\cdot D(x)\nabla\phi(x) & = 0 &\text{ on } \partial \Omega. \label{eq:DEbc}
\end{align}
Here, $D=D(x)$ is the \textit{diffusion coefficient}, $\sigma_a=\sigma_a(x)$ is the \textit{absorption coefficient}, $S=S(x)$ is the spatial distribution of the \textit{bio-luminescence source}, $\ell$ is the \textit{extrapolation length}, and $\nu$ is the unit outer normal vector field to $\partial\Omega$. 
Henceforth, we will assume that the light intensity is measured only over a narrow band of frequencies, so that the diffusion coefficient $D$ and the absorption coefficient $\sigma_a$ are frequency-independent. The inverse problem in BLT can be stated as follows: given $D(x)$ and $\sigma_a(x)$, recover the internal source $S(x)$ from the boundary photon density $\phi|_{\Gamma}$ measured on an open subset of the boundary $\Gamma\subset\partial\Omega$.

\medskip
\textbf{Ultrasound Modulation.} The measurement in diffusive BLT alone is insufficient to uniquely identify the bio-luminescence source. This is clear from the above formulation, as the inverse problem in BLT is a classical inverse source problem that is well known to lack unique solutions~\cite{isakov1990inverse}. Diffusive BLT typically suffers from limited spatial resolution due to strong scattering of light in soft tissue. Various methods have been proposed to enhance the identifiability and spatial resolution of the bio-luminescence source. One of them~\cite{huynh2013ultrasound} makes use of a focused ultrasound beam to modulate BLT and generate additional data. 
Here, ultrasound modulation means performing the usual BLT measurement while the medium
undergoes a series of acoustic perturbation.

In the literature, two distinct models have been proposed for ultrasound modulation. One involves modulation with spherical waves, as detailed in~\cite{ammari2012acousto}, where the displacement function from a short diverging spherical acoustic impulse is derived. This model finds application in the analysis of ultrasound modulation across electromagnetic tomography~\cite{ammari2012acousto}, diffuse optical tomography~\cite{AmmBosGarNguSep}, and acousto-optic imaging~\cite{AmmNguSep}. 
The other model involves modulation with plane waves, for which the displacement function is calculated in~\cite{bal2010inverse}. This model has been studied, for instance, in the analysis of ultrasound modulated bio-luminescence tomography~\cite{bal2016ultrasound, bal2014ultrasound, chung2020ultrasound}, optical tomography~\cite{bal2014local, ChuHosSchRTE, ChuSch,chung2020coherent, chung2017inverse, li2019hybrid, li2020inverse}, and acousto-electromagnetic imaging~\cite{bal2013cauchy,li2021acousto,li2023inverse}.
In this paper, we will assume plane-wave modulation.

Suppose the incident plane wave is of the form $\cos(q\cdot x + \varphi)$ where $q\in\mathbb{R}^n$ is the wave vector and $\varphi$ is the phase. The time scale of the acoustic field propagation is generally much greater than that of the optical field, hence the acoustic field can effectively modulate the optical field.
Following~\cite{bal2014ultrasound}, the effect of the acoustic modulation on the aforementioned optical parameters takes the form:
\begin{align}
    D_\varepsilon(x)&\coloneqq (1 + \varepsilon(2\gamma-1) \cos(q\cdot x + \varphi))D(x), \label{eq:mD}\\
    \sigma_{a,\varepsilon}(x)&\coloneqq (1 + \varepsilon(2\gamma+1) \cos(q\cdot x + \varphi))\sigma_a(x), \label{eq:msigma} \\
    S_\varepsilon(x)&\coloneqq (1+\varepsilon \cos(q\cdot x + \varphi))S(x), \label{eq:mS}
\end{align}
where $\gamma$ is the elasto-optical constant, 
$0\leq\varepsilon\ll1$ is a small parameter related to the amplitude, frequency, time, density and acoustic wave speed~\cite{bal2014ultrasound}.

\medskip
\textbf{Inverse Problem in Diffusive Ultrasound Modulated BLT (UMBLT).} In the presence of ultrasound modulation, the optical parameters and the bio-luminescence source are modulated according to~\eqref{eq:mD}-\eqref{eq:mS}. The diffusion equation for the modulated photon density $\phi_\varepsilon$ reads
\begin{alignat}{2}
    -\nabla\cdot D_\varepsilon(x) \nabla \phi_\varepsilon(x) + \sigma_{a,\varepsilon}(x) \phi_\varepsilon(x) & = S_\varepsilon(x)\quad &&\text{ in }  \Omega. \label{eq:mDE} \\
    \phi_\varepsilon +\ell\nu\cdot D_\varepsilon\nabla\phi_\varepsilon & = 0 &&\text{ on } \partial \Omega. \label{eq:mDEbc}
\end{alignat}
We will write $D_0, \sigma_{a,0}, \phi_0$ for the quantities without modulation, that is, when $\varepsilon=0$. 
The measurement in UMBLT is the modulated boundary photon density on an open subset of the boundary $\Gamma\subset\partial\Omega$:
\begin{equation} \label{eq:DEmeas}
\Lambda_{\varepsilon, q, \varphi}[S] := \phi_\varepsilon|_{\Gamma}, \quad\quad \text{ for any } q\in\mathbb{R}^n, \; \varepsilon\geq 0.
\end{equation}
We refer to the measurement as \textit{full data} if $\Gamma=\partial\Omega$ and \textit{partial data} if $\Gamma\subsetneq\partial\Omega$. Note that assuming such measurement, the modulated boundary photon current $\nu\cdot D_\varepsilon \nabla\phi_{\varepsilon}|_{\Gamma}$ is readily known on $\Gamma$ in view of the relation~\eqref{eq:mDEbc}.
Therefore, the inverse problem in UMBLT is to recover the bio-luminescence source $S$ from the measurement~\eqref{eq:DEmeas}, assuming $D$ and $\sigma_a$ are given.

\medskip
\textbf{Literature Review.} 
We briefly review the literature on mathematical inverse problems in BLT and UMBLT.
In the diffusive regime (that is, the light propagation is modeled by the diffusion equation), the BLT and UMBLT aim to recover the spatial distribution of the bioluminescent source, that is $S(x)$ in~\eqref{eq:DE} and $S_0(x)$ in~\eqref{eq:mDE}, respectively. 
The diffusive BLT measures a single diffusion solution at the boundary. This type of boundary data has a lower dimension compared to that of the unknown source, resulting in an underdetermined inverse problem that generally suffers from nonuniqueness unless a priori information is provided regarding the source~\cite{cong2005practical, isakov1990inverse}. 
Various strategies have been proposed in the literature to address the under-determination in BLT. One of them utilizes the idea of ultrasound modulation, leading to the development of the UMBLT.
The diffusive UMBLT measures a series of perturbed diffusion solutions at the boundary. Through asymptotic analysis and integration-by-parts techniques, this boundary data can be readily converted into equivalent internal data, resulting in a formally-determined inverse problems~\cite{bal2014ultrasound}.

In the transport regime (that is, the light propagation is modeled by the radiative transfer equation), the inverse problems in BLT and UMBLT seek to recover a bioluminescent source in the radiative transfer equation (RTE). 
The transport BLT measures angularly-resolved RTE solution at the boundary. The angular measurement provides additional information in contrast to diffusive BLT, making the transport BLT problem formally-determined ($n=2$) or even overdetermined ($n\geq 3$). In particular, some uniqueness, stability, and reconstruction results have been obtained for the transport BLT problem in~\cite{bal2007inverse,fujiwara2019fourier,fujiwara2020numerical,fujiwara2021source,stefanov2008inverse}. 
On the other hand, the transport UMBLT measures a series of perturbed RTE solutions at the boundary. This boundary data can be likewise converted into internal data, resulting in an inverse source problem with internal functional data for the RTE~\cite{BalSurvey}. Several uniqueness and stability results have been established in~\cite{bal2016ultrasound,chung2020ultrasound}

\medskip
\textbf{Contribution of the Paper.} The paper proposes a reconstructive source imaging procedure for diffusive UMBLT in optically anisotropic media with partial data and uncertain optical parameters. Within the framework of mathematical theory of diffusive UMBLT, the major contributions include:
\begin{itemize}
    \item Reconstruction in Optically Anisotropic Media. Optically anisotropic materials have different optical properties depending on the direction of light propagation within them. This is in contrast to optically isotropic materials, where the optical properties remain the same regardless of direction. A reconstruction procedure for diffusive UMBLT has been obtained in optically isotropic media~\cite{bal2014ultrasound}. In Section~\ref{sec:full}, we follow the idea of the proof in~\cite{bal2014ultrasound} and generalize it to optically anisotropic media. The study provides a more comprehensive understanding of diffusive UMBLT imaging in optically complex media. 
    \item Reconstruction with Partial Data. In practical situations, it is common to have access only to partial or incomplete measurements due to limitations in sensing devices or environmental factors. Consequently, our study extends to source imaging in diffusive UMBLT when data is solely attainable at partial boundary. Our results encompasses the refinement of the reconstruction procedure to accommodate partial data, thereby furnishing a theoretical underpinning for source imaging with limited data acquisition, see Theorem~\ref{thm:positive}.
    \item Uncertainty Quantification from the PDE Perspective. Our reconstruction procedure, with full or partial data, hinges essentially on prior knowledge of optical parameters, notably the diffusion coefficient and the absorption coefficient. As a result, it is paramount to understand the consequence of inaccuracies within these optical parameters on the source imaging process. One method to quantify such a consequence involves assessing the discrepancies between PDE solutions~\cite{lai2022inverse,ren2020characterizing}. In this paper, we take this perspective to investigate the source imaging problem in UMBLT. We derive a quantitative uncertainty estimate using the PDE theory of second-order elliptic equations, see Theorem~\ref{thm:continuousUQ}. The estimate demonstrates how the variance of the source is linked to the variance of the optical parameters.
    \item Discrete Formulation for Diffusive UMBLT. The diffusive UMBLT model is further discretized using the staggered grid scheme to yield a discrete model. This discrete formulation serves two purposes: on the one hand, it provides a finite dimensional formulation of the source imaging problem in UMBLT; on the other hand, it facilitates the subsequent numerical implementation and validation of the diffusive model. Our analysis is further extended to this discrete model: we prove that the finite-dimensional formulation is well posed, adapt the reconstructive procedure to the discrete model, and derive a discrete estimate to quantify the impact of uncertain optical parameters on the discrete source imaging process, see Theorem~\ref{thm:discreteUQ}.
\end{itemize}

\textbf{Paper Organization.} 
The paper is structured as follows. In Section~\ref{sec:full}, we derive internal data from the boundary measurement in UMBLT assuming plan-wave modulation, and propose the reconstruction procedure with full data in anisotropic media. This reconstruction procedure is generalized in Section~\ref{sec:partial} to the situation where only partial boundary measurement is available. Section~\ref{sec:continuous} establishes an uncertainty quantification estimate for the reconstruction procedure. Section~\ref{sec:discrete} discretizes the diffusion equation using the staggered grid scheme to result in a discrete formulation of the UMBLT inverse problem. A discrete reconstruction procedure is derived along with a discrete uncertainty quantification estimate. Section~\ref{sec:numerics} is devoted to the implementation of the reconstruction procedure as well as quantitative validation using numerical examples.

\bigskip
\section{Reconstruction with Full Data} \label{sec:full}

Throughout the paper, the following hypotheses are made regarding the anisotropic diffusion coefficient $D(x)$ and the absorption coefficient $\sigma_a(x)$:
\medskip
\begin{itemize}
    \item[\textbf{H1}] $D(x)$ is a matrix-valued function and $D(x)=I$ near $\partial\Omega$. Here, $I$ is the identity matrix.
    \item[\textbf{H2}] $\sigma_a\in C^\alpha(\Omega),D_{ij}\in C^{1,\alpha}(\Omega)$ where $C^{k,\alpha}$ is the H\"older space of order $k$ with exponent $\alpha\in (0,1)$.
    \item[\textbf{H3}] $D(x)$ is positive definite for all $x\in\Omega$, that is, there exists a constant $\lambda>0$ such that 
    \[\frac{1}{\lambda}|\xi|^2\geq\xi^\top D(x)\xi\geq\lambda|\xi|^2\quad\text{ a.e. on } \overline{\Omega}\] 
    holds for any $\xi\in\mathbb{R}^n$. % $\lambda$ on $\overline{\Omega}$;
    \item[\textbf{H4}] $\sigma_a \geq 0$ a.e. on $\overline{\Omega}$. 
\end{itemize}
Under these hypotheses, we will derive a reconstructive procedure to recover the internal source $S$, provided the anisotropic diffusion coefficient $D(x)$ and the absorption coefficient $\sigma_a(x)$ are given. The idea is similar to the proof in~\cite{bal2014ultrasound} in spirit, but is generalized to anisotropic $D(x)$. Recall that the full boundary measurement means $\Gamma=\partial\Omega$.

\medskip
Consider the adjoint problem to~\eqref{eq:mDE} \eqref{eq:mDEbc} with $\varepsilon=0$ and a prescribed Robin boundary condition $g$:
\begin{align}
    -\nabla\cdot D(x) \nabla \psi(x) + \sigma_{a}(x) \psi(x) & = 0\quad &\text{ in }  \Omega. \label{eq:aDE} \\
    \psi +\ell\nu\cdot D\nabla\psi & = g &\text{  on } \partial \Omega. \label{eq:aDEbc}
\end{align}
Note that the adjoint solution $\psi$ can be computed, as $D$, $\sigma_a$ and $g$ are known.
We multiply~\eqref{eq:mDE} by $\psi$, multiple~\eqref{eq:aDE} by $\phi_\varepsilon$, then integrate their difference by parts over $\Omega$ to obtain
\begin{equation} \label{eq:36}
    -\frac{1}{\ell}\int_{\partial \Omega} g \phi_\varepsilon \dif s =\int_{\Omega}(D_\varepsilon-D_0)\nabla\phi_\varepsilon\cdot\nabla\psi+(\sigma_{a,\varepsilon}-\sigma_{a,0})\phi_\varepsilon\psi-\psi S_{\varepsilon}\dif x,
\end{equation}
where the boundary integral are computed using the boundary conditions~\eqref{eq:mDEbc} \eqref{eq:aDEbc}. Expand both sides in $\varepsilon$ using~\eqref{eq:mD}-\eqref{eq:mS} and equate the $O(\varepsilon)$-terms to obtain
\begin{equation}
        -\frac{1}{\ell}\int_{\partial \Omega} g \frac{\partial\phi_\varepsilon}{\partial\varepsilon}|_{\varepsilon=0} \dif s
        =\int_{\Omega} \left[ (2\gamma-1)D\nabla\phi_0\cdot\nabla\psi+(2\gamma+1)\sigma_{a}\phi_0\psi-\psi S)  \right] \cos(q\cdot x+\varphi)\dif x.
\end{equation}
As the left hand side is known from the measurement~\eqref{eq:DEmeas}, so is the right hand side. By varying the modulation parameters $q$ and $\varphi$, one can recover the Fourier transform of the following function:
\begin{equation}\label{eq:Hpsi}
    H_\psi\coloneqq(2\gamma-1)D\nabla\phi_0\cdot\nabla\psi+(2\gamma+1)\sigma_{a}\phi_0\psi-\psi S.
\end{equation}
If we choose a specific adjoint solution $\psi_0$ such that $\psi_0\geq c>0$ for some constant $c$, then dividing both sides by $\psi_0$ and substituting $S$ by the equation~\eqref{eq:mDE} with $\varepsilon=0$ give the following PDE
\begin{equation}\label{eq:38}
    F_{\psi_0}\coloneqq\frac{H_{\psi_0}}{\psi_0} = \nabla\cdot D \nabla \phi_0 + (2\gamma-1)D\nabla\phi_0\cdot\nabla\log\psi_0+2\gamma\sigma_{a}\phi_0.
\end{equation}
This is a second order elliptic PDE for $\phi_0$ with known coefficients, which can be solved along with the boundary condition~\eqref{eq:mDEbc} with $\varepsilon=0$ to yield $\phi_0$. Finally, the source $S$ can be computed from~\eqref{eq:DE}.

It remains to show the existence of the positive adjoint solution $\psi_0$. To see this, note that there are suitable Dirichlet boundary conditions such that a positive solution $\psi_0\geq c>0$ exists by the maximum principle. One can take the corresponding Robin data $g=\psi_0+\ell\nu\cdot D\nabla\psi_0$ to ensure the solution of~\eqref{eq:aDE} \eqref{eq:aDEbc} is $\psi_0$.

\bigskip
\section{Reconstruction with Partial Data} \label{sec:partial}

In this section, we aim to extend the reconstruction procedure in Section~\ref{sec:full} to the partial data case where the boundary measurement is made only on an open subset $\Gamma\subsetneq\partial\Omega$.
A careful examination of the proof suggests that the following modifications are necessary in order to adapt the idea: 
(1). the left hand side of~\eqref{eq:36} must be computable in order to obtain the internal data $H_\psi$ from the right hand side. In the partial data case, $\phi_\varepsilon$ is known only on $\Gamma$, this restriction requires the choice of the adjoint boundary condition $g$ to vanish on $\partial\Omega\backslash\Gamma$, that is, $g|_{\partial\Omega\backslash\Gamma}=0$. 
(2). A critical ingredient in the proof with full data is the existence of a positive adjoint solution $\psi_0>0$. In the partial data case, we need to show the existence of a positive adjoint solution $\psi_0>0$ with the additional constraint $g|_{\partial\Omega\backslash\Gamma}=0$.
Once the second modification is verified, the reconstructive procedure in Section~\ref{sec:full} would apply to the partial data case as well.

The main part of this section is devoted to proving the existence of a positive solution to the adjoint problem~\eqref{eq:aDE} \eqref{eq:aDEbc} with $g|_{\partial\Omega\backslash\Gamma}=0$.
Instead of directly constructing a positive adjoint solution, we consider the following adjoint equation with mixed boundary conditions:
\begin{alignat}{2}
    -\nabla\cdot D(x) \nabla \psi(x) + \sigma_{a}(x) \psi(x) & = 0\quad &&\text{ in }  \Omega. \label{eq:mixDE} \\
    \psi +\ell\nu\cdot D\nabla\psi & = 0 &&\text{ on } \partial \Omega \setminus \Gamma . \label{eq:mixDEbc1}\\
    \psi & = f &&\text{ on } \Gamma . \label{eq:mixDEbc2}
\end{alignat}
Once we find a positive solution $\psi$ to this mixed boundary value problem, we can simply take $g=(\psi +\ell\nu\cdot D\nabla\psi)|_{\partial\Omega}$ in the adjoint problem~\eqref{eq:aDE}-\eqref{eq:aDEbc}, then the adjoint solution is $\psi>0$. 

The following result ensures the well-posedness of the mixed boundary value problem.

\begin{prop}[{\cite[Theorem 1]{lieberman1986mixed}}]\label{thm:mixWP}
Assume that
\[\sigma_a\in C^\alpha(\Omega),\qquad D_{ij}\in C^{1,\alpha}(\Omega),\qquad f\in C(\Gamma)\cap L^\infty(\Gamma),\]
then \eqref{eq:mixDE}-\eqref{eq:mixDEbc2} has a unique solution $\psi\in C^2(\overline{\Omega}\setminus\overline{\Gamma})\cap C^0(\overline{\Omega})$
\end{prop}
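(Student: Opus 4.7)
The plan is to combine a variational existence argument with Schauder-type regularity estimates, and to handle the interface $\partial\Gamma$ (where the Dirichlet portion meets the Robin portion) by a local barrier construction. I would present this as four stages: weak existence, uniqueness, interior/partial-boundary regularity, and continuity at the junction.

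First I would produce a weak solution. Extend $f$ to some $\tilde f\in H^1(\Omega)\cap C(\overline\Omega)$ (possible by the Tietze extension followed by mollification against a cut-off), and look for $\psi=\tilde f+u$ with $u$ in the closed subspace $V:=\{v\in H^1(\Omega):v|_{\Gamma}=0\}$. On $V$ the bilinear form
\[
a(u,v)=\int_\Omega D\nabla u\cdot\nabla v\,dx+\int_\Omega \sigma_a uv\,dx+\frac{1}{\ell}\int_{\partial\Omega\setminus\Gamma} uv\,ds
\]
is bounded by H1--H2 and coercive by H3--H4 together with the Poincar\'e inequality on $V$ (which holds because $v|_\Gamma=0$ and $\Gamma$ has positive surface measure). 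The Lax--Milgram theorem then yields a unique $u\in V$ solving $a(u,v)=-a(\tilde f,v)$ for all $v\in V$, giving a weak solution $\psi\in H^1(\Omega)$. Uniqueness for the full problem is immediate: two solutions differ by an element of $V$ that lies in the kernel of $a$, hence vanishes.

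Next I would upgrade the regularity wherever boundary conditions behave uniformly. Interior $C^{2,\alpha}$ regularity is standard Schauder theory given $D_{ij}\in C^{1,\alpha}$ and $\sigma_a\in C^\alpha$. Near an interior point of $\Gamma$, the Dirichlet data $f\in C(\Gamma)$ combined with barrier arguments (using that $\Omega$ has smooth boundary, so it satisfies the exterior sphere condition) yields continuity of $\psi$ up to $\Gamma$; near an interior point of $\partial\Omega\setminus\overline\Gamma$, local Schauder estimates for the oblique/Robin problem produce $C^{2,\alpha}$ regularity up to the boundary. Together with the interior estimate this establishes $\psi\in C^2(\overline\Omega\setminus\overline\Gamma)$ and continuity of $\psi$ up to every point of $\partial\Omega\setminus\partial\Gamma$.

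The main obstacle is continuity of $\psi$ at points of the interface $\partial\Gamma$, where the boundary condition changes type. I would proceed pointwise at $x_0\in\partial\Gamma$ by constructing local barriers. Flattening $\partial\Omega$ locally, using H1 so that $D=I$ there, reduces the problem to a half-space model with mixed Dirichlet/Robin data on two half-planes meeting along an $(n-2)$-dimensional edge. For this model one builds an explicit barrier by superposing a harmonic sector solution matching the prescribed Dirichlet value $f(x_0)$ with a small correction absorbing the Robin piece; the weak maximum principle applied to $\psi\pm(\text{barrier}+\omega(|x-x_0|))$ for a modulus of continuity $\omega$ of $f$ then pinches the oscillation of $\psi$ near $x_0$ to zero. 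This is the delicate part because regularity generically fails to reach $C^{2,\alpha}$ at the interface; continuity is the sharp conclusion and is exactly what the statement asserts. Combining the four stages yields the unique solution $\psi\in C^2(\overline\Omega\setminus\overline\Gamma)\cap C^0(\overline\Omega)$.
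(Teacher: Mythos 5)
The paper does not actually prove this proposition: it is quoted verbatim as Theorem~1 of Lieberman's 1986 paper on mixed boundary value problems, so there is no internal argument to compare against. Judged on its own terms, your outline has the right overall shape (existence, uniqueness, regularity away from the interface, continuity at the interface), but the first stage contains a genuine gap that propagates through the rest. A function $f\in C(\Gamma)\cap L^\infty(\Gamma)$ need not be the trace of any $H^1(\Omega)$ function: the trace space of $H^1$ is $H^{1/2}(\partial\Omega)$, and there are continuous functions on a boundary arc that fail to lie in $H^{1/2}$ (e.g.\ lacunary series such as $\sum_k 2^{-k/2}e^{i2^k\theta}$ on a circle). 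Tietze extension produces only a continuous extension, and mollifying destroys the prescribed boundary values, so the decomposition $\psi=\tilde f+u$ with $u\in V$ is simply not available and Lax--Milgram cannot be invoked. The classical remedy --- and the one Lieberman actually uses --- is Perron's method (equivalently, uniform approximation of $f$ by smooth data, solving for each approximant, and passing to the limit via the maximum principle), which bypasses the variational framework entirely. Relatedly, your uniqueness argument (``two solutions differ by an element of $V$ in the kernel of $a$'') presupposes that the classical solution lies in $H^1(\Omega)$, which is not guaranteed under these hypotheses; uniqueness in the class $C^2(\overline\Omega\setminus\overline\Gamma)\cap C^0(\overline\Omega)$ should instead be run through the weak maximum principle and the Hopf lemma, exactly as the paper does in its Theorem on positivity.

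The fourth stage is also where essentially all of the technical content of Lieberman's theorem lives, and your treatment of it is an assertion rather than a proof: ``one builds an explicit barrier by superposing a harmonic sector solution with a small correction absorbing the Robin piece'' states what must be done without doing it. The barrier must be a genuine supersolution of the \emph{mixed} problem in a neighborhood of the edge $\partial\Gamma$ --- satisfying the Robin inequality on one face while dominating $f$ near the Dirichlet face --- and verifying the Robin inequality for a sector-type function near the edge is precisely the delicate computation. Either carry out that computation for the flattened half-space model, or do as the paper does and cite Lieberman's result directly.
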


\begin{thm}\label{thm:positive}
Supppose the hypotheses \textbf{H1}-\textbf{H4} hold. If the Dirichlet boundary condition $f\in C(\Gamma)\cap L^\infty(\Gamma)$ is positive, then the mixed boundary value problem \eqref{eq:mixDE}-\eqref{eq:mixDEbc2} admits a unique solution $\psi\in C^2(\overline{\Omega}\setminus\overline{\Gamma})\cap C^0(\overline{\Omega})$ which is positive on $\overline{\Omega}$.
\end{thm}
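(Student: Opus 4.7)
The plan is to argue by contradiction using the strong maximum principle together with the Hopf boundary-point lemma; existence and uniqueness of $\psi\in C^2(\overline{\Omega}\setminus\overline{\Gamma})\cap C^0(\overline{\Omega})$ are already given by Proposition~\ref{thm:mixWP}, so only positivity needs to be established. Let $L:=-\nabla\cdot D\nabla + \sigma_a$ and suppose, for contradiction, that $m:=\min_{\overline{\Omega}}\psi\leq 0$, attained at some point $x_0\in\overline{\Omega}$. Hypotheses \textbf{H2}, \textbf{H3}, \textbf{H4} guarantee that $L$ is a uniformly elliptic operator with H\"older coefficients and a nonnegative zeroth-order term, which is precisely the setting in which both the strong minimum principle and the Hopf lemma apply.

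First I would rule out an interior minimizer. If $x_0\in\Omega$, then $\psi$ attains a nonpositive minimum in the interior while satisfying $L\psi=0\geq 0$. Because $\sigma_a\geq 0$, the strong minimum principle forces $\psi\equiv m$ on the connected component of $\Omega$ containing $x_0$, hence on all of $\overline{\Omega}$ by continuity. This contradicts $\psi|_\Gamma = f>0$. Next I would handle a minimizer on the Dirichlet part: if $x_0\in\Gamma$, then $\psi(x_0)=f(x_0)>0$, contradicting $m\leq 0$.

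The main obstacle, and the reason the partial-data case is more delicate than the full-data case, is ruling out a minimizer on the Robin part $\partial\Omega\setminus\Gamma$. Here I would invoke Hopf's lemma in its co-normal form, which is applicable since \textbf{H1} gives $D=I$ near $\partial\Omega$ so the co-normal derivative $\nu\cdot D\nabla\psi$ coincides with the usual outer normal derivative in a neighborhood of $x_0$, and since $\psi$ is not constant (otherwise we are back in the previous case). Hopf's lemma yields $\nu\cdot D\nabla\psi(x_0)<0$, and then the Robin condition \eqref{eq:mixDEbc1} gives
\[
\psi(x_0) \;=\; -\ell\,\nu\cdot D\nabla\psi(x_0) \;>\; 0,
\]
contradicting $\psi(x_0)=m\leq 0$.

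The only remaining subtlety concerns minimizers at the interface $\overline{\Gamma}\setminus\Gamma$, where Hopf's lemma cannot be applied directly. I would handle this by exploiting the continuity $\psi\in C^0(\overline{\Omega})$: any such point $x_0$ lies in $\overline{\Gamma}$, and continuity of $\psi$ together with $\psi|_\Gamma=f\geq \inf_\Gamma f>0$ forces $\psi(x_0)\geq \inf_\Gamma f>0$, again contradicting $m\leq 0$. Combining the three (or four) cases, we conclude $m>0$, which gives $\psi>0$ throughout $\overline{\Omega}$.
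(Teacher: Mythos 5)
Your proof is correct and follows essentially the same route as the paper: rule out an interior nonpositive minimum with the strong minimum principle, rule out a minimum on $\Gamma$ trivially, and rule out a minimum on the Robin part $\partial\Omega\setminus\Gamma$ by combining the Hopf lemma (legitimate since \textbf{H1} makes the co-normal derivative the normal derivative there) with the boundary condition \eqref{eq:mixDEbc1}. The paper organizes this slightly differently --- first pushing a strictly negative minimum to the boundary with the weak maximum principle and contradicting the Robin condition by the first-derivative test, then treating a zero minimum separately --- but the ingredients and the logic are the same, and your merged case analysis with $m\le 0$ is a clean streamlining. The one place you genuinely diverge is the interface $\overline{\Gamma}\setminus\Gamma$, which you are right to isolate (the solution is only $C^2$ away from $\overline{\Gamma}$, so Hopf is unavailable there; the paper's proof silently lumps these points into $\partial\Omega\setminus\Gamma$). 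However, your fix relies on $\inf_\Gamma f>0$, which does not follow from $f$ being pointwise positive, continuous and bounded on the \emph{open} set $\Gamma$: $f$ may decay to zero at $\partial\Gamma$, in which case continuity only yields $\psi(x_0)\ge 0$ at an interface point and the strict positivity (and hence the uniform lower bound $\psi\ge c>0$ used downstream) is not obtained. To close this you would need to assume $f$ is bounded below by a positive constant, or extend $f$ positively and continuously to $\overline{\Gamma}$ --- a hypothesis worth making explicit, since the theorem as stated is delicate precisely at these points.
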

\begin{proof}
    By Proposition~\ref{thm:mixWP}, the mixed boundary value problem has a unique solution $\psi\in C^2(\overline{\Omega}\setminus\overline{\Gamma})\cap C^0(\overline{\Omega})$. 
    Suppose $\psi$ takes negative values on $\overline{\Omega}$, the weak maximum principle \cite[Section 6.4 Theorem 2]{evans10} 
    claims that the minimum is achieved on the boundary $\partial\Omega$. Since $\psi|_\Gamma >0$, the minimum must be achieved at a point $x_0\in\partial\Omega\setminus\Gamma $, that is, $\psi(x_0)=\inf_{x\in\overline{\Omega}}\psi<0$. According to the Robin boundary condition \eqref{eq:mixDEbc1}, we have
    $$
    \partial_\nu\psi(x_0) = \nu\cdot D\nabla\psi(x_0) = -\frac{1}{\ell} \psi(x_0) >0
    $$
    where the first equality holds since $D(x) = I$ near $\partial\Omega$. This contradicts that $x_0$ is a global minimum of $\psi$ over $\overline{\Omega}$. Therefore, $\psi\geq 0$ on $\overline{\Omega}$.

    If $\psi$ achieves the zero minimum at an interior point, that is, $\psi(x)=0$ for some $x\in \Omega$, the strong maximum principle \cite[Section 6.4 Theorem 4]{evans10} forces $\psi\equiv {\rm constant}$ in $\Omega$. In view of the Robin boundary condition on $\partial\Omega\backslash\Gamma$, we have $\psi\equiv 0$, contradicting that $\psi|_{\Gamma}=f>0$. Therefore, $\psi>0$ in $\Omega$.

    It remains to show $\psi|_{\partial\Omega} >0$, or more precisely, $\psi|_{\partial\Omega\backslash\Gamma}>0$ since $\psi|_{\Gamma} = f>0$. Suppose otherwise, that is, there exists $x_0\in\partial\Omega\setminus\Gamma $ such that $\psi(x_0)=\inf_{x\in\overline{\Omega}}\psi=0$. Applying the Hopf Lemma \cite[Section 6.4 Lemma 3(ii)]{evans10} to $-\psi$ shows that $\partial_\nu\psi(x_0)<0$, then 
    $$
    \psi(x_0)+\ell\nu\cdot D\nabla\psi(x_0) = \ell \partial_\nu  \psi(x_0) < 0,
    $$
    contraciting the boundary condition on $\partial\Omega\backslash\Gamma$. Therefore, we must have $\psi|_{\partial\Omega\backslash\Gamma}>0$.

    Combining all the cases, we see that $\psi$ is a positive solution on the compact set $\overline{\Omega}$, hence has a positive lower bound. This completes the proof.
\end{proof}

\begin{remark}
Theorem~\ref{thm:positive} ensures the existence of a positive adjoint solution $\psi>0$ with partial data, then we can reconstruct the source $S$ using the same process as for the full data case.    
\end{remark}

\bigskip
\section{Uncertainty Quantification with Continuous Diffusive Model} \label{sec:continuous}

The reconstructive procedures in Section 2 and Section 3 rely essentially on accurate prior knowledge of the optical coefficients $(D,\sigma)$ to solve the elliptic equation~\eqref{eq:38} (along with boundary conditions) for $\phi_0$. The underlying rationale is that these optical coefficients can be measured in advance using other imaging modalities such as optical tomography~\cite{arridge2009optical}. Practically, the imaging process in these additional modalities inevitably introduces inaccuracy to the optical coefficients, which in turn will impact the UMBLT reconstructions. 
In the subsequent two sections, we aim to quantify the impact to the reconstruction of the bio-luminescence source $S$ that is due to the inaccuracy of the optical coefficients, using the continuous and discretized models respectively.

\medskip
Let $(D,\sigma_a)$ be the underlying true optical coefficients, and $(\tilde{D},\tilde{\sigma}_a)$ be the optical coefficients that are reconstructed through additional imaging modalities before performing UMBLT. 
Observe that $(\tilde{D},\tilde{\sigma}_a)$ do not play a role in the derivation of the internal data: This is because the boundary integral on the left hand side of~\eqref{eq:36} remains the same, thus we can derive $H_\psi$ as before. Hereafter, we will assume the internal data $H_\psi$ has been accurately extracted, and focus on quantifying the uncertainty of the reconstructed source $S$. The full data case and partial data case will be handled in one shot, since the reconstruction process are identical once a suitable positive adjoint solution $\psi_0 >0$ is chosen.

\medskip
We record a regularity result for the diffusion equation with Robin boundary conditions. Here, $W^{s,\infty}(\Omega)$ and $H^{s}(\Omega)$ denotes the $L^\infty$-based and $L^2$-based Sobolev spaces of order $s\in\mathbb{R}$, respectively.
%\Yang{\st{Do our hypotheses satisfy all the assumptions in \cite[Theorem~2.4]{dong2022w}, especially Asumption 2.2 there?}}

\begin{prop}[{\cite[Theorem~2.4]{dong2022w}}] \label{thm:ellipreg}
Suppose $D$ is uniformly elliptic, $D_{ij}\in W^{1,\infty}(\Omega)$, $\sigma_a\geq0$ a.e. For $S\in L^2(\Omega)$ and $g\in H^{\frac{1}{2}}(\partial\Omega)$, the following boundary value problem 
\begin{alignat}{2} 
    -\nabla\cdot D(x) \nabla \phi(x) + \sigma_{a}(x) \phi(x) & = S(x)\quad &&\text{ in } \Omega. \label{eq:30}\\
    \phi +\ell\nu\cdot D\nabla\phi & = g &&\text{ on } \partial \Omega.\label{eq:31}
\end{alignat}
admits a unique solution $\phi\in H^2(\Omega)$ with the estimate
\begin{equation}
    \|\phi\|_{H^2(\Omega)}\leq C(\|S\|_{L^2(\Omega)}+\|g\|_{H^{\frac{1}{2}}(\partial\Omega)})
\end{equation}
where $C$ is a constant independent of $\phi$.
\end{prop}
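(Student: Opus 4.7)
The plan is to establish the result in two stages: first obtain existence and uniqueness of a weak solution in $H^1(\Omega)$ via Lax--Milgram, then upgrade its regularity to $H^2(\Omega)$ using standard elliptic regularity techniques adapted to the Robin boundary condition. Throughout I would treat the right-hand data $S$ and $g$ as fixed and keep careful track of how constants depend on them so the final estimate falls out automatically.

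For the first stage, I would multiply \eqref{eq:30} by a test function $v\in H^1(\Omega)$, integrate by parts, and use \eqref{eq:31} to eliminate $\nu\cdot D\nabla\phi$ on the boundary. This yields the weak formulation $B(\phi,v)=F(v)$ with
\begin{equation*}
B(\phi,v)=\int_\Omega D\nabla\phi\cdot\nabla v\,dx+\int_\Omega \sigma_a\phi v\,dx+\frac{1}{\ell}\int_{\partial\Omega}\phi v\,ds,\qquad F(v)=\int_\Omega Sv\,dx+\frac{1}{\ell}\int_{\partial\Omega}gv\,ds.
\end{equation*}
Continuity of $B$ on $H^1(\Omega)\times H^1(\Omega)$ follows from the ellipticity bound on $D$, the $L^\infty$ bound on $\sigma_a$, and the trace inequality; continuity of $F$ uses duality $H^{-1/2}(\partial\Omega)\times H^{1/2}(\partial\Omega)$. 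For coercivity, the positivity of $D$ (via \textbf{H3}) controls $\|\nabla\phi\|_{L^2}^2$, and the nonnegative boundary term plus a Friedrichs-type inequality $\|v\|_{L^2(\Omega)}\le C(\|\nabla v\|_{L^2(\Omega)}+\|v\|_{L^2(\partial\Omega)})$ recovers the missing $\|v\|_{L^2(\Omega)}^2$ control even when $\sigma_a\equiv 0$. Lax--Milgram then produces a unique $\phi\in H^1(\Omega)$ satisfying $\|\phi\|_{H^1(\Omega)}\le C(\|S\|_{L^2(\Omega)}+\|g\|_{H^{1/2}(\partial\Omega)})$.

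For the second stage, I would proceed locally. Interior $H^2$ regularity is obtained by the classical difference-quotient method applied to the weak form; because $D_{ij}\in W^{1,\infty}$, differentiating $-\nabla\cdot D\nabla\phi+\sigma_a\phi$ tangentially produces only $L^2$ terms, and testing against quotients of $\phi$ gives $\|\phi\|_{H^2_{\mathrm{loc}}(\Omega)}\le C(\|S\|_{L^2}+\|\phi\|_{H^1})$. For boundary regularity, I would use a finite partition of unity and flatten $\partial\Omega$ locally to $\{x_n=0\}$. On each patch I would apply tangential difference quotients $D_k^h$ ($k<n$) to the weak formulation; the bulk term yields control of $D_k^h\nabla\phi$ in $L^2$ as usual, while the Robin boundary term becomes $\frac{1}{\ell}\int D_k^h\phi \cdot D_k^h v\,ds'$, which is nonnegative when tested against $v=D_k^h\phi$ cut off suitably, and therefore absorbs cleanly into the coercive side. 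This gives all mixed second derivatives involving at least one tangential direction in $L^2$, controlled by $\|S\|_{L^2}+\|g\|_{H^{1/2}}$. Finally, the purely normal second derivative $\partial_{nn}\phi$ is recovered algebraically from the PDE itself: solving $D_{nn}\partial_{nn}\phi=-S-\text{(tangential and first order terms)}$ using uniform ellipticity of $D$ yields the remaining $L^2$ bound.

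The main obstacle will be the boundary $H^2$ step, specifically the bookkeeping for the Robin condition under tangential difference quotients: unlike Dirichlet (where $\phi$ vanishes on the boundary) or pure Neumann (where no trace term appears in $B$), the Robin term couples $\phi|_{\partial\Omega}$ into the bilinear form, so one must ensure that the commutator $[D_k^h,\sigma_a]$ and the boundary integrals generated by flattening do not produce terms that cannot be reabsorbed. The hypothesis $D=I$ near $\partial\Omega$ (\textbf{H1}) is actually a convenient simplification here, since it removes the interaction between the matrix $D$ and the normal direction on $\partial\Omega$, so $\nu\cdot D\nabla\phi=\partial_\nu\phi$ in a neighborhood of the boundary and the Robin condition reduces to the standard form; this should let the argument go through without extracting lower-order curvature corrections from $D$.
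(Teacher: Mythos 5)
The paper does not prove this statement at all: it is imported verbatim as a quoted result, attributed to Theorem~2.4 of the cited reference (Dong et al.), so there is no in-paper proof to compare against. Your self-contained argument is the standard one for $H^2$ Robin regularity --- Lax--Milgram with the bilinear form $B(\phi,v)=\int_\Omega D\nabla\phi\cdot\nabla v+\int_\Omega\sigma_a\phi v+\tfrac{1}{\ell}\int_{\partial\Omega}\phi v$, coercivity via the Friedrichs-type inequality $\|v\|_{L^2(\Omega)}\le C(\|\nabla v\|_{L^2(\Omega)}+\|v\|_{L^2(\partial\Omega)})$ (which is exactly what is needed since $\sigma_a$ may vanish), then interior and boundary difference quotients plus algebraic recovery of $\partial_{nn}\phi$ --- and it is essentially sound. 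What it buys over the citation is transparency about where each hypothesis enters and an explicit route to the constant $C$.

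Two points deserve tightening. First, in the boundary $H^2$ step you account for the $\tfrac{1}{\ell}\int D_k^h\phi\,D_k^h v\,ds'$ term but omit the companion term generated by the data, $\tfrac{1}{\ell}\int D_k^h g\,D_k^h\phi\,ds'$; since $g$ is only in $H^{1/2}(\partial\Omega)$, $D_k^h g$ is bounded uniformly in $h$ only in $H^{-1/2}(\partial\Omega)$, so this term must be estimated by the $H^{-1/2}\times H^{1/2}$ duality together with the trace inequality $\|D_k^h\phi\|_{H^{1/2}(\partial\Omega)}\le C\|D_k^h\phi\|_{H^{1}(\Omega)}$ and then absorbed by Young's inequality; this is exactly where the $\|g\|_{H^{1/2}(\partial\Omega)}$ on the right-hand side of the final estimate comes from, so it should not be skipped. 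Second, your appeal to \textbf{H1} ($D=I$ near $\partial\Omega$) is not licensed by the proposition's hypotheses, which assume only uniform ellipticity and $D_{ij}\in W^{1,\infty}(\Omega)$; moreover the paper later applies the proposition to the perturbed coefficient $\tilde D$, for which $\tilde D=I$ near $\partial\Omega$ is not guaranteed. Fortunately the standard flattening argument does not need \textbf{H1}: after the change of variables one absorbs the full matrix into the transformed coefficient and the same difference-quotient computation goes through, so you should drop that crutch rather than rely on it.
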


We have the following global uncertainty quantification (UQ) estimate for the diffusive UMBLT reconstruction.

\begin{thm} \label{thm:continuousUQ}
Suppose all optical coefficients and solutions satisfy
\begin{align*}
&\|D_{ij}\|_{W^{1,\infty}(\Omega)},\|\tilde D_{ij}\|_{W^{1,\infty}(\Omega)} \leq C_D,&&\|\phi\|_{W^{2,\infty}(\Omega)},\|\tilde\phi\|_{W^{2,\infty}(\Omega)} \leq C_\phi,\\
&\|\psi\|_{W^{2,\infty}(\Omega)},\|\tilde{\psi}\|_{W^{2,\infty}(\Omega)} \leq C_\psi,&&\|\sigma_a\|_{L^\infty(\Omega)} \leq C_\sigma,\\
&\psi,\tilde\psi\geq c_\psi>0,&&
\end{align*}
where $C_D,C_\phi,C_\psi,C_\sigma,c_\psi$ are constants, and $0$ is not eigenvalue of the following operators equipped with the zero Robin boundary condition:
\[\nabla\cdot D \nabla + (2\gamma-1)D\nabla\log\psi_0\cdot\nabla+2\gamma\sigma_{a},\quad\nabla\cdot \tilde{D} \nabla  + (2\gamma-1)\tilde{D}\nabla\log\tilde{\psi}_0\cdot\nabla + 2\gamma\tilde{\sigma}_{a},\]
then we can find constants $C_{1ij},C_2>0$ such that
\begin{equation} \label{eq:Sest}
    \|S-\tilde S\|_{L^2(\Omega)}\leq\sum_{i\leq j} C_{1ij}\|(D-\tilde D)_{ij}\|_{H^1(\Omega)} + C_2\|\sigma_a-\tilde \sigma_a\|_{L^2(\Omega)}
\end{equation}
\end{thm}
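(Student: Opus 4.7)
The plan is to propagate the coefficient perturbations through the two main analytic steps of the reconstruction from Section~\ref{sec:full}: solving the elliptic PDE~\eqref{eq:38} for the forward photon density $\phi_0$, and recovering the source via the original diffusion equation~\eqref{eq:DE}. A useful algebraic observation is that, since the true $\phi_0$ is fully determined by~\eqref{eq:DE} with the true coefficients alone, the adjoint $\psi_0$ enters only as a test function during internal-data extraction; consequently, one can arrange for the $\psi_0-\tilde\psi_0$ dependence to cancel between the perturbed operator and the perturbed data, leaving a bound that involves only $D-\tilde D$ and $\sigma_a-\tilde\sigma_a$.

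Concretely, let $L$ and $\tilde L$ denote the elliptic operators in~\eqref{eq:38} built from $(D,\sigma_a,\psi_0)$ and $(\tilde D,\tilde\sigma_a,\tilde\psi_0)$ respectively, so that $L\phi_0=F_{\psi_0}$ and $\tilde L\tilde\phi=F_{\tilde\psi_0}$, both with homogeneous Robin boundary condition (the residual vanishes by \textbf{H1} since $D=\tilde D=I$ near $\partial\Omega$). Expressing $F_{\tilde\psi_0}$ in terms of the true unknowns via the source substitution $S=-\nabla\cdot D\nabla\phi_0+\sigma_a\phi_0$ and then subtracting $\tilde L\phi_0$ from $\tilde L\tilde\phi$, the $\nabla\log\tilde\psi_0$ term in $F_{\tilde\psi_0}$ cancels the corresponding drift contribution in $\tilde L\phi_0$, leaving
\[
\tilde L(\phi_0-\tilde\phi) = \nabla\cdot\bigl((\tilde D-D)\nabla\phi_0\bigr) + (2\gamma-1)(\tilde D-D)\nabla\phi_0\cdot\nabla\log\tilde\psi_0 + 2\gamma(\tilde\sigma_a-\sigma_a)\phi_0.
\]
The eigenvalue hypothesis makes $\tilde L$ invertible on functions with zero Robin data. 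An extension of Proposition~\ref{thm:ellipreg} accommodating the bounded first-order drift $(2\gamma-1)\tilde D\nabla\log\tilde\psi_0\in L^\infty(\Omega)$ (finite thanks to $\tilde\psi_0\geq c_\psi>0$) then yields
\[
\|\phi_0-\tilde\phi\|_{H^2(\Omega)}\leq C\Big(\sum_{i\leq j}\|(D-\tilde D)_{ij}\|_{H^1(\Omega)} + \|\sigma_a-\tilde\sigma_a\|_{L^2(\Omega)}\Big),
\]
with $C$ depending on $C_D,C_\phi,C_\psi,c_\psi,C_\sigma,\gamma$.

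Finally, subtracting~\eqref{eq:DE} for $(\phi_0,D,\sigma_a,S)$ and $(\tilde\phi,\tilde D,\tilde\sigma_a,\tilde S)$ gives
\[
S-\tilde S = -\nabla\cdot D\nabla(\phi_0-\tilde\phi) + \sigma_a(\phi_0-\tilde\phi) - \nabla\cdot\bigl((D-\tilde D)\nabla\tilde\phi\bigr) + (\sigma_a-\tilde\sigma_a)\tilde\phi ;
\]
taking $L^2$ norms and combining the preceding $H^2$ estimate with the $W^{2,\infty}$ bound on $\tilde\phi$ and the coefficient bounds produces the claimed~\eqref{eq:Sest}. The main obstacle is the middle step: one must upgrade Proposition~\ref{thm:ellipreg} from pure divergence-form operators to operators with an additional bounded first-order drift under Robin boundary condition, and convert the eigenvalue hypothesis into a quantitative inversion constant via the Fredholm alternative. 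The identified cancellation of $\nabla\log\psi_0-\nabla\log\tilde\psi_0$ is what keeps the final estimate free of any auxiliary bound on $\psi_0-\tilde\psi_0$; should this cancellation fail (e.g., in the discrete analogue of Section~\ref{sec:discrete}), one could still succeed by first applying Proposition~\ref{thm:ellipreg} to the subtracted adjoint equation and then threading the resulting $H^2$ control of $\psi_0-\tilde\psi_0$ through the forward step.
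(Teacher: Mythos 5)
Your overall architecture (bound $\|S-\tilde S\|_{L^2}$ by $\|\phi_0-\tilde\phi\|_{H^2}$ plus coefficient differences, then control $\|\phi_0-\tilde\phi\|_{H^2}$ through the internal-data equation~\eqref{eq:38}) matches the paper's first two steps, and your observation that Proposition~\ref{thm:ellipreg} must be upgraded to handle the first-order drift under the eigenvalue hypothesis is a fair point that applies to the paper's argument as well. However, the central claim of your middle step --- that the $\psi_0$-dependence cancels so that no bound on $\psi_0-\tilde\psi_0$ is needed --- is incorrect. The measured internal functional is $H_{\psi}$ built from the \emph{true} adjoint solution $\psi$ (the integration-by-parts identity~\eqref{eq:36} only holds when $\psi$ solves the adjoint problem with the true coefficients), whereas the reconstruction divides this fixed datum by the \emph{computed} adjoint $\tilde\psi$, i.e.\ $F_{\tilde\psi}=H_\psi/\tilde\psi=(\psi/\tilde\psi)F_\psi$. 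Comparing $\tilde L\phi_0$ with $F_{\tilde\psi}$, the drift term of $\tilde L\phi_0$ is $(2\gamma-1)\tilde D\nabla\phi_0\cdot\nabla\tilde\psi/\tilde\psi$ while the corresponding piece of $F_{\tilde\psi}$ is $(2\gamma-1)D\nabla\phi_0\cdot\nabla\psi/\tilde\psi$; their difference contains $(2\gamma-1)D\nabla\phi_0\cdot\nabla(\tilde\psi-\psi)/\tilde\psi$, and similarly the zeroth-order and second-order pieces carry a factor $\psi/\tilde\psi\neq 1$. So the right-hand side of your equation for $\tilde L(\phi_0-\tilde\phi)$ is missing terms of the form $\frac{H_\psi}{\psi\tilde\psi}(\psi-\tilde\psi)$ and $(2\gamma-1)\tilde D\nabla\tilde\phi\cdot(\nabla\log\psi-\nabla\log\tilde\psi)$, and the resulting estimate is not a valid bound.

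The fix is exactly what you mention as a contingency in your last sentence, and it is the paper's actual third step: subtract the two adjoint equations to get
\[
-\nabla\cdot\tilde D\nabla(\psi-\tilde\psi)+\tilde\sigma_a(\psi-\tilde\psi)=\nabla\cdot(D-\tilde D)\nabla\psi-(\sigma_a-\tilde\sigma_a)\psi
\]
with zero Robin data, apply Proposition~\ref{thm:ellipreg} to obtain $\|\psi-\tilde\psi\|_{H^1(\Omega)}\lesssim\sum_{i\leq j}\|(D-\tilde D)_{ij}\|_{H^1(\Omega)}+\|\sigma_a-\tilde\sigma_a\|_{L^2(\Omega)}$, and then substitute this into the $\phi_0-\tilde\phi$ estimate (using $\psi,\tilde\psi\geq c_\psi$ to control the $\log$ differences). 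With that third elliptic estimate inserted, your first and last steps go through and yield~\eqref{eq:Sest}; without it, the argument has a genuine gap.
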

\begin{proof}
Let $\phi$ and $\tilde\phi$ solve the diffusion equations
$$
S = -\nabla\cdot [D\nabla \phi]+\sigma_a \phi, \quad\quad\quad 
\tilde S = -\nabla\cdot[\tilde D\nabla\tilde \phi]+\tilde\sigma_a \tilde \phi,
$$
respectively. Subtract these equations to get
$$    
S - \tilde S = -\nabla \cdot[(D-\tilde D)\nabla \phi]-\nabla\cdot[\tilde D\nabla (\phi-\tilde \phi)]+(\sigma_a-\tilde\sigma_a) \phi+\tilde\sigma_a(\phi-\tilde \phi).
$$
Taking the $L^2$-norms on both sides, we have
\begin{equation} \label{eq:Sdiff}
    \begin{aligned}
    &\|S-\tilde S\|_{L^2(\Omega)}\\
    \leq & \ \|\nabla \cdot[(D-\tilde D)\nabla \phi]\|_{L^2(\Omega)}+\|\nabla\cdot[\tilde D\nabla (\phi-\tilde \phi)]\|_{L^2(\Omega)} + \|(\sigma_a-\tilde\sigma_a) \phi\|_{L^2(\Omega)}+\|\tilde\sigma_a(\phi-\tilde \phi)\|_{L^2(\Omega)}\\
    \leq&\sum_{ij}\|\partial_j\phi\|_{L^\infty(\Omega)}\|\partial_i (D-\tilde D)_{ij}\|_{L^2(\Omega)}+\sum_{ij}\|\partial_{ij}\phi\|_{L^\infty(\Omega)}\|(D-\tilde D)_{ij}\|_{L^2(\Omega)}\\
    &+\sum_{ij}\|\partial_i\tilde D_{ij}\|_{L^\infty(\Omega)}\|\partial_j (\phi-\tilde\phi)\|_{L^2(\Omega)}+\sum_{ij}\|\tilde D_{ij}\|_{L^\infty(\Omega)}\|\partial_{ij} (\phi-\tilde\phi)\|_{L^2(\Omega)}\\
    &+\|\phi\|_{L^\infty(\Omega)}\|\sigma_a-\tilde\sigma_a\|_{L^2(\Omega)}+\|\tilde\sigma_a\|_{L^\infty(\Omega)}\|\phi-\tilde \phi\|_{L^2(\Omega)}\\
    \leq& c_1\|\phi-\tilde \phi\|_{H^2(\Omega)}+\sum_{i\leq j}c_{2ij}\|(D-\tilde D)_{ij}\|_{H^1(\Omega)}+c_3\|\sigma_a-\tilde\sigma_a\|_{L^2(\Omega)}
    \end{aligned}
\end{equation}
where the constants $c_1,c_{2ij},c_3>0$ can be made explicit as follows:
{\small
\begin{align*}
    c_1&=\sqrt{\|\tilde\sigma_a\|_{L^\infty(\Omega)}^2+\sum_j\left[\sum_{i}\|\partial_i\tilde D_{ij}\|_{L^\infty(\Omega)}\right]^2+4\sum_{i<j}\|\tilde D_{ij}\|_{L^\infty(\Omega)}^2+\sum_{i}\|\tilde D_{ii}\|_{L^\infty(\Omega)}^2}\\
    c_{2ij}&=\sqrt{4\|\partial_{ij}\phi\|_{L^\infty(\Omega)}^2+\left(\|\partial_i\phi\|_{L^\infty(\Omega)}+\|\partial_j\phi\|_{L^\infty(\Omega)}\right)^2}\qquad(i<j)\\
    c_{2ii}&=\sqrt{\|\partial_{ii}\phi\|_{L^\infty(\Omega)}^2+\|\partial_i\phi\|_{L^\infty(\Omega)}^2}\\
    c_3&=\|\phi\|_{L^\infty(\Omega)}
\end{align*}
}

In order to estimate the term $\|\phi-\tilde \phi\|_{H^2(\Omega)}$, we turn to the second order elliptic equations generated from the internal data $H_{\psi} = H_{\tilde\psi}$:
\begin{align*}
F_{\psi}&=\frac{H_{\psi}}{\psi}=(2\gamma-1)D\nabla\phi\cdot\nabla\log\psi+2\gamma\sigma_{a}\phi+\nabla\cdot D \nabla \phi\\
    F_{\tilde{\psi}} &= \frac{H_{\psi}}{\tilde \psi}=(2\gamma-1)\tilde D\nabla\tilde\phi\cdot\nabla\log\tilde\psi+2\gamma\tilde\sigma_{a}\tilde\phi+\nabla\cdot \tilde D \nabla \tilde\phi.
\end{align*}
Subtracting these equations gives
\begin{equation*}
    \begin{aligned}
    &-\nabla\cdot \tilde D \nabla [\phi-\tilde\phi] -2\gamma\tilde{\sigma}_a(\phi-\tilde{\phi})-(2\gamma-1)\tilde D\nabla (\phi-\tilde \phi)\cdot\nabla\log\psi\\
    & = \frac{H_{\psi}}{\psi\tilde\psi}(\psi-\tilde\psi)+(2\gamma-1)(D-\tilde D)\nabla \phi\cdot\nabla\log \psi\\
    &+(2\gamma-1)\tilde D\nabla \tilde\phi\cdot(\nabla\log \psi-\nabla\log\tilde \psi)+2\gamma(\sigma_a-\tilde{\sigma}_a)\phi+\nabla\cdot [D-\tilde D] \nabla\phi,
    \end{aligned}
\end{equation*}
This is a second order elliptic equation for $\phi-\tilde \phi$ with zero Robin boundary condition, we have the following regularity estimate by Proposition~\ref{thm:ellipreg}:
\begin{equation} \label{eq:phidiff}
    \begin{aligned}
    & \|\phi-\tilde \phi\|_{H^2(\Omega)}\\
    \leq &
    C \biggl(\left\|\frac{H_{\psi}}{\psi\tilde\psi}(\psi-\tilde\psi)\right\|_{L^2(\Omega)} + |2\gamma-1|\|(D-\tilde D)\nabla \phi\cdot\nabla\log \psi\|_{L^2(\Omega)} \\
    & +|2\gamma-1|\|\tilde D\nabla \tilde\phi\cdot(\nabla\log \psi-\nabla\log\tilde \psi)\|_{L^2(\Omega)} +\|\nabla \cdot[D-\tilde D]\nabla \phi\|_{L^2(\Omega)}+|2\gamma|\|(\sigma_a-\tilde{\sigma}_a)\phi\|_{L^2(\Omega)}\biggr)\\
    \leq & C\biggl(\frac{\|H_{\psi}\|_{L^\infty(\Omega)}}{c_\psi^2}\|\psi-\tilde\psi\|_{L^2(\Omega)} + |2\gamma-1|\sum_{ij}\|\partial_i\log \psi\|_{L^\infty(\Omega)}\|\partial_j \phi\|_{L^\infty(\Omega)}\|(D-\tilde D)_{ij}\|_{L^2(\Omega)}\\
    &+|2\gamma-1|\sum_{ij}\|\tilde D_{ij}\|_{L^\infty(\Omega)}\|\partial_j \tilde\phi\|_{L^\infty(\Omega)}\|\partial_i(\log \psi-\log\tilde \psi)\|_{L^2(\Omega)} + \sum_{ij}\|\partial_j\phi\|_{L^\infty(\Omega)}\|\partial_i (D-\tilde D)_{ij}\|_{L^2(\Omega)} \\
    & +\sum_{ij}\|\partial_{ij}\phi\|_{L^\infty(\Omega)}\|(D-\tilde D)_{ij}\|_{L^2(\Omega)} + |2\gamma|\|\phi\|_{L^\infty(\Omega)}\|\sigma_a-\tilde{\sigma}_a\|_{L^2(\Omega)}\biggr)\\
    \leq& c_4\|\psi-\tilde \psi\|_{H^1(\Omega)}+\sum_{i\leq j}c_{5ij}\|(D-\tilde D)_{ij}\|_{H^1(\Omega)}+c_6\|\sigma_a-\tilde{\sigma}_a\|_{L^2(\Omega)}
    \end{aligned}
\end{equation}
where in the last inequality, we used the upper bound $\|\partial_i\log \psi\|_{L^\infty(\Omega)}\leq\frac{1}{c_\psi}\|\partial_i\psi\|_{L^\infty(\Omega)}$ and
 \begin{equation*}
      \begin{aligned}
     \|\partial_i(\log \psi-\log\tilde \psi)\|_{L^2(\Omega)}
     \leq & \frac{1}{c_\psi^2}\|\psi \partial_i\tilde \psi-\tilde\psi \partial_i\psi\|_{L^2(\Omega)}
     = \frac{1}{c_\psi^2}\|(\psi-\tilde\psi) \partial_i\tilde \psi-\tilde\psi \partial_i(\psi-\tilde{\psi})\|_{L^2(\Omega)}\\
     \leq & \frac{1}{c_\psi^2}\|\partial_i\tilde \psi\|_{L^\infty(\Omega)}\|\psi-\tilde\psi\|_{L^2(\Omega)} +\frac{1}{c_\psi^2}\|\tilde\psi\|_{L^\infty(\Omega)}\| \partial_i(\psi-\tilde{\psi})\|_{L^2(\Omega)}
 \end{aligned}
 \end{equation*}
The constants $c_4,c_{5ij},c_6>0$ are defeind as
{\small
 \begin{align*}
      c_4=&\frac{C|2\gamma-1|}{c_\psi^2}\left(\sum_{i}\left(\sum_j\|\tilde D_{ij}\|_{L^2(\overline{\Omega})}\|\partial_j \phi\|_{L^\infty(\overline{\Omega})}\|\tilde\psi\|_{L^\infty(\overline{\Omega})}\right)^2\right. +\left.\left(\frac{\|H_{\psi}\|_{L^\infty(\overline{\Omega})}}{|2\gamma-1|}+\sum_{ij}\|\tilde D_{ij}\|_{L^2(\overline{\Omega})}\|\partial_j \phi\|_{L^\infty(\overline{\Omega})}\|\partial_i\tilde\psi\|_{L^\infty(\overline{\Omega})}\right)^2\right)^{\frac{1}{2}} \\
     c_{5ij}=&C\cdot\Biggl(\left(2\|\partial_{ij}\phi\|_{L^\infty(\overline{\Omega})}+\frac{|2\gamma-1|}{c_\psi}\|\partial_i\psi\|_{L^\infty(\overline{\Omega})}\|\partial_j \phi\|_{L^\infty(\overline{\Omega})}+\frac{|2\gamma-1|}{c_\psi}\|\partial_j\psi\|_{L^\infty(\overline{\Omega})}\|\partial_i \phi\|_{L^\infty(\overline{\Omega})}\right)^2\\
     &\qquad\qquad\qquad+\left(\|\partial_i \phi\|_{L^\infty(\overline{\Omega})}+\|\partial_j \phi\|_{L^\infty(\overline{\Omega})}\right)^2\Biggr)^{\frac{1}{2}}\qquad(i<j)\\
     c_{5ii}=&C\cdot\sqrt{\left(\|\partial_{ii}\phi\|_{L^\infty(\overline{\Omega})}+\frac{|2\gamma-1|}{c_\psi}\|\partial_i\psi\|_{L^\infty(\overline{\Omega})}\|\partial_i \phi\|_{L^\infty(\overline{\Omega})}\right)^2+\|\partial_i \phi\|_{L^\infty(\overline{\Omega})}^2}\\
     c_6=&|2\gamma|C\cdot\|\phi\|_{L^\infty(\overline{\Omega})}
 \end{align*}
}

It remains to estimate the term $\|\psi-\tilde \psi\|_{H^1(\Omega)}$. Let us consider the adjoint equations
\begin{equation}
    \begin{aligned}
    -\nabla\cdot D\nabla \psi+\sigma_a \psi &= 0,\\
    -\nabla\cdot\tilde D\nabla\tilde \psi+\tilde\sigma_a \tilde \psi &= 0.
    \end{aligned}
\end{equation}
Subtract these two equations to get
\begin{equation}
    -\nabla\cdot\tilde D\nabla( \psi-\tilde \psi)+\tilde\sigma_a(\psi-\tilde \psi)=\nabla\cdot (D-\tilde D)\nabla \psi-(\sigma_a-\tilde\sigma_a) \psi
\end{equation}
This is a second order elliptic equation for $\psi-\tilde \psi$ with the zero Robin boundary condition. Again, by the elliptic regularity result Proposition~\ref{thm:ellipreg}, we have
\begin{equation} \label{eq:psidiff}
    \begin{aligned}
    &\|\psi-\tilde \psi\|_{H^1(\Omega)}\\
    \leq& C(\|\nabla\cdot [(D-\tilde D)\nabla \psi]\|_{L^2(\Omega)}+\|(\sigma_a-\tilde\sigma_a) \psi\|_{L^2(\Omega)})\\
    \leq& C\biggl(\sum_{ij}\|\partial_j\psi\|_{L^\infty(\Omega)}\|\partial_i (D-\tilde D)_{ij}\|_{L^2(\Omega)}+\sum_{ij}\|\partial_{ij}\psi\|_{L^\infty(\Omega)}\|(D-\tilde D)_{ij}\|_{L^2(\Omega)} + \|\psi\|_{L^\infty(\Omega)}\|(\sigma_a-\tilde\sigma_a) \|_{L^2(\Omega)}\biggr)\\
    \leq& \sum_{i\leq j}c_{7ij}\|(D-\tilde D)_{ij}\|_{H^1(\Omega)}+c_8\|\sigma_a-\tilde \sigma_a\|_{L^2(\Omega)}
    \end{aligned}
\end{equation}
with constants $c_{7ij},c_8>0$, where
\begin{align*}
    c_{7ij}&=C\cdot\sqrt{\left(\|\partial_i\psi\|_{L^\infty(\Omega)}+\|\partial_j\psi\|_{L^\infty(\Omega)}\right)^2+4\|\partial_{ij}\psi\|_{L^\infty(\Omega)}^2}\qquad(i<j)\\
    c_{7ii}&=C\cdot\sqrt{\|\partial_i\psi\|_{L^\infty(\Omega)}^2+\|\partial_{ii}\psi\|_{L^\infty(\Omega)}^2}\\
    c_8&=C\cdot\|\psi\|_{L^\infty(\Omega)}
\end{align*}
Combining~\eqref{eq:Sdiff} \eqref{eq:phidiff} \eqref{eq:psidiff}, we conclude that
\begin{equation}
    \|S-\tilde S\|_{L^2(\Omega)}\leq \sum_{i\leq j}C_{1ij}\|(D-\tilde D)_{ij}\|_{H^1(\Omega)} + C_2\|\sigma-\tilde \sigma\|_{L^2(\Omega)},
\end{equation}
with $C_{1ij}=c_1c_4c_{7ij}+c_1c_{5ij}+c_{2ij}$ and $C_2=c_1c_4c_8+c_1c_6+c_3$. Note that all the constants in this proof are explicit, except for the constant $C$ that comes from the estimate of elliptic regularity.
\end{proof}

\begin{remark}
Theorem~\ref{thm:continuousUQ} can be interpreted as follows. Squaring the estimate~\eqref{eq:Sest} gives
$$
\|S-\tilde S\|^2_{L^2(\Omega)}\leq \mathfrak{C} \left( \|D-\tilde D\|^2_{H^1(\Omega)} + \|\sigma_a-\tilde \sigma_a\|^2_{L^2(\Omega)} \right)
$$
where the constant $\mathfrak{C}$ is in terms of $C_{1ij}$ and $C_2$. If we take $S, D, \sigma_a$ to be the underlying ground-truth parameters and $\tilde{S}, \tilde{D}, \tilde{\sigma}_a$  the corresponding parameters in the presence of additive random uncertainty of mean zero, then $\mathbb{E}[\tilde{S}] = S$, $\mathbb{E}[\tilde{D}] = D$, $\mathbb{E}[\tilde{\sigma}_a] = \sigma_a$. The estimate provides a quantitative error bound on the variance of the bioluminescent source. 
\end{remark}

\bigskip
\section{Uncertainty Quantification with Discretized Diffusive Model} \label{sec:discrete}

In the previous section, we considered the impact of inaccurate $(D,\sigma_a)$ using continuous PDE models. However, for the subsequent numerical simulation, the PDEs have to be discretized into finite dimensional discrete models. This motivates us to study a similar UQ problem based on the finite difference discretization of the PDE model. The analysis in this section provides a finite dimensional counterpart of the infinite dimensional UQ estimate~\eqref{eq:Sest}, bridging the gap between the infinite dimensional analysis and the finite dimensional numerical experiments.

We will consider the discretization of three diffusion-type equations: the forward problem~\eqref{eq:mDE} \eqref{eq:mDEbc}, the adjoint problem~\eqref{eq:aDE} \eqref{eq:aDEbc}, and the internal data problem~\eqref{eq:Hpsi} equipped with the zero Robin boundary condition. These problems need to be discretized in order to implement the reconstruction procedure outlined in Section~\ref{sec:full}. The discretization procedure requires numerical evaluation of the terms $\nabla\cdot D\nabla\phi_0$, $D\nabla\phi_0\cdot\nabla \log\psi_0$, and $\sigma_a\phi_0$. The last term can be readily evaluated on a grid. In the following, we explain how to discretize the first two differential operators using the staggered grid scheme.

We take $\Omega$ to be a 2D domain to agree with the setup of the subsequent numerical experiments. The 2D coordinates are written as $(x,y)$. The problem in 3D can be considered likewise with an additional spatial variable. Let $\Delta x$, $\Delta y$ denote the grid size on the $x$-direction and $y$-direction, respectively. 
We will discretize the divergence-form diffusion operator using the staggered grid scheme, see Figure~\ref{fig:staggered_grid}. The black dots are indexed by $(i,j)$, where $i=1,2,\dots,N_x$, $j=1,2,\dots,N_y$, white dots are indexed by $(i+\frac{1}{2},j)$, where $i=1,2,\dots,N_x-1$, $j=1,2,\dots,N_y$ and $(i,j+\frac{1}{2})$, where $i=1,2,\dots,N_x$, $j=1,2,\dots,N_y-1$. For a function $u$, we use $u_{i,j}$ to represent an approximate value of $u(x_i,y_j)$, where $x_i = x_1 + (i-1) \Delta x$ and $y_j = y_1 + (j-1) \Delta y$ are the coordinates of the grid points.

\begin{figure}[h]
    \centering
    \includegraphics[width = 0.3\textwidth]{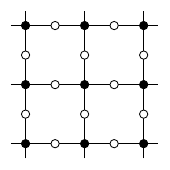}
    \caption{The illustration of staggered grid scheme. The zero and second order terms are defined on the grid points (black dots), the first order terms and $D$ are defined on the edges (while dots).}
    \label{fig:staggered_grid}
\end{figure}

\subsection{Discretization with Isotropic Diffusion Coefficients.}
We begin the discretization with an isotropic diffusion coefficient, that is, $D=D(x)$ is a scalar function.

\subsubsection{Discretization of the Forward Problem.}
First, we consider discretization of the forward problem~\eqref{eq:mDE} \eqref{eq:mDEbc}. Using the staggered grid scheme, the operator $\nabla\cdot D\nabla$ is discretized as 
\begin{equation}\label{eq:stagger1}
    \begin{aligned}
        [\nabla\cdot D\nabla u]_{i,j} 
        =& [\partial_xD\partial_xu + \partial_yD\partial_yu]_{i,j}\\
        \approx & \frac{[D\partial_xu]_{i+\frac{1}{2},j} - [D\partial_xu]_{i-\frac{1}{2},j}}{\Delta x} + \frac{[D\partial_yu]_{i,j+\frac{1}{2}}-[D\partial_yu]_{i,j-\frac{1}{2}}}{\Delta y}\\
        \approx & \frac{D_{i+\frac{1}{2},j}[u_{i+1,j}-u_{i,j}] - D_{i-\frac{1}{2},j}[u_{i,j}-u_{i-1,j}]}{\Delta x^2} \\
         &+ \frac{D_{i,j+\frac{1}{2}}[u_{i,j+1}-u_{i,j}]-D_{i,j-\frac{1}{2}}[u_{i,j}-u_{i,j-1}]}{\Delta y^2}\\
        =& \left[\frac{D_{i+\frac{1}{2},j}}{\Delta x^2}\right]u_{i+1,j} + \left[\frac{D_{i-\frac{1}{2},j}}{\Delta x^2}\right]u_{i-1,j} + \left[\frac{D_{i,j+\frac{1}{2}}}{\Delta y^2}\right]u_{i,j+1} + \left[\frac{D_{i,j-\frac{1}{2}}}{\Delta y^2}\right]u_{i,j-1}  \\
         &-\left[\frac{D_{i+\frac{1}{2},j}}{\Delta x^2}+\frac{D_{i-\frac{1}{2},j}}{\Delta x^2}+\frac{D_{i,j+\frac{1}{2}}}{\Delta y^2}+\frac{D_{i,j-\frac{1}{2}}}{\Delta y^2}\right]u_{i,j},
    \end{aligned}    
\end{equation}
where $\approx$ denotes the staggered grid scheme approximation.

For the Robin boundary condition on the four boundaries (excluding the four corners), it is simply $u\pm2D\partial_xu$ on the right/left boundary, $u\pm2D\partial_yu$ on the top/bottom boundary. For the four corner points, e.g. the bottom left corner (Figure~\ref{fig:staggered_grid_boundary}), the outgoing vector $\nu$ is chosen as $(-\frac{\sqrt{2}}{2},-\frac{\sqrt{2}}{2})$. For example,
\begin{equation}\label{eq:stagger2}
    \begin{aligned}
        [u+\ell\nu\cdot D\nabla u]_{1,1} 
        =& u_{1,1} - \frac{\sqrt{2}\ell}{2}[D\partial_xu]_{1+\frac{1}{2},1} - \frac{\sqrt{2}\ell}{2}[D\partial_yu]_{1,1+\frac{1}{2}}\\
        =& u_{1,1} + \frac{\sqrt{2}\ell}{2}\frac{D_{1+\frac{1}{2},1}}{\Delta x}[u_{1,1}-u_{1,2}] + \frac{\sqrt{2}\ell}{2}\frac{D_{1,1+\frac{1}{2}}}{\Delta y}[u_{1,1}-u_{2,1}]\\
        =& \left[1+\frac{\sqrt{2}\ell D_{1+\frac{1}{2},1}}{2\Delta x}+\frac{\sqrt{2}\ell D_{1,1+\frac{1}{2}}}{2\Delta y}\right]u_{1,1} -\frac{\sqrt{2}\ell D_{1+\frac{1}{2},1}}{2\Delta x}u_{1,2} - \frac{\sqrt{2}\ell D_{1,1+\frac{1}{2}}}{2\Delta y}u_{2,1}.
    \end{aligned}    
\end{equation}

\begin{figure}[h]
    \centering
    \includegraphics[width = 0.3\textwidth]{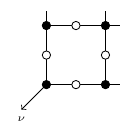}
    \caption{The outgoing vector at the corner}
    \label{fig:staggered_grid_boundary}
\end{figure}

This discretization gives rise to a linear system with the unknowns $u_{i,j}$. In order to make this linear system explicit, we introduce the index function $\mathcal{I}(i,j) \coloneqq (i-1)N_y+j$ and use $(i,j)\sim(i',j')$ to mean that the $(i',j')$-point is a neighbor of $(i,j)$-point. Denote by $I$ the set of interior points, by $B$ the set of non-corner boundary points, and by $B_c$ the set of four corner points.
According to the scheme~\eqref{eq:stagger1}, \eqref{eq:stagger2}, the forward problem~\eqref{eq:mDE} \eqref{eq:mDEbc} is discretized to yield the linear system
\[\mathbf{L}\bm{\phi}_0 = \mathbf{s}\]
where $\bm{\phi}_0$ consists of the vectorized values of the forward solution $\phi_0$ at black dots such that ${\bm{\phi}_0}_{\mathcal{I}(i,j)} = \phi_0(x_i,y_j)$.
\begin{equation}\label{eq:Lmatrix}
    \mathbf{L}_{\mathcal{I}(i,j),\mathcal{I}(i',j')} = 
    \begin{cases}
        \sum_{(\tilde{i},\tilde{j})\sim(i,j)}\frac{D_{\frac{i+\tilde{i}}{2},\frac{j+\tilde{j}}{2}}}{|i-\tilde{i}|\Delta x^2 + |j-\tilde{j}|\Delta y^2} + \sigma_{i,j},&(i',j') = (i,j),\,(i,j)\in I\\
        -\frac{D_{\frac{i+i'}{2},\frac{j+j'}{2}}}{|i-i'|\Delta x^2 + |j-j'|\Delta y^2},&(i',j') \sim (i,j),\,(i,j)\in I\\
        1+\ell\sum_{I\ni(\tilde{i},\tilde{j})\sim(i,j)}\frac{D_{\frac{i+\tilde{i}}{2},\frac{j+\tilde{j}}{2}}}{|i-\tilde{i}|\Delta x + |j-\tilde{j}|\Delta y},& (i',j') = (i,j),\,(i,j)\in B\\
        -\ell\frac{D_{\frac{i+i'}{2},\frac{j+j'}{2}}}{|i-i'|\Delta x + |j-j'|\Delta y},& I\ni(i',j') \sim (i,j)\in B,\\
        1+\frac{\sqrt{2}\ell}{2}\sum_{(\tilde{i},\tilde{j})\sim(i,j)}\frac{D_{\frac{i+\tilde{i}}{2},\frac{j+\tilde{j}}{2}}}{|i-\tilde{i}|\Delta x + |j-\tilde{j}|\Delta y},& (i',j') = (i,j),\,(i,j)\in B_c,\\
        -\frac{\sqrt{2}\ell}{2}\frac{D_{\frac{i+i'}{2},\frac{j+j'}{2}}}{|i-i'|\Delta x + |j-j'|\Delta y},& (i',j') \sim (i,j),\,(i,j)\in B_c,\\
        0&\text{others}
    \end{cases}
\end{equation}

\begin{equation}
    \mathbf{s}_{\mathcal{I}(i,j)} =
    \begin{cases}
        S_{i,j},&(i,j)\in I,\\
        0,&(i,j)\in B\cup B_c.
    \end{cases}
\end{equation}

Before discussing further properties of the matrix $\mathbf{L}$, we recall the definition of some special matrices. Given a square matrix $A = (A_{kl})$, its $k$-th row is said to be \textit{weakly diagonally dominant (WDD)} if $|A_{kk}|\geq\sum_{l\neq k}|A_{kl}|$, and the matrix $A$ is said to be WDD if all the rows are WDD. Likewise, its $k$-th row is said to be \textit{strictly diagonally dominant (SDD)} if $\geq$ is replaced by a strict inequality $>$, and the matrix $A$ is said to be SDD if all the rows are WDD.
\begin{defn}
A square matrix $A = (A_{kl})$ is said to be weakly chained diagonally dominant (WCDD) if
\begin{itemize}
    \item $A$ is WDD.
    \item For each row $k$ that is not SDD, 
    %there exists a walk from $k$ to $l$ in the directed graph of $A$ ending at an SDD row $l$, i.e. 
    there exists $k_1,k_2,\dots,k_p$ such that $A_{kk_1},A_{k_1k_2},\dots,A_{k_{p-1}k_p},A_{k_pl}$ are nonzero and the row $A_{l, :}$ is SDD.
\end{itemize}
\end{defn}

\begin{prop}
    $\mathbf{L}$ is a WCDD matrix.
\end{prop}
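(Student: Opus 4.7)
The plan is to split the argument into two parts: first verify that $\mathbf{L}$ is WDD by inspecting the three block types of rows (interior, boundary non-corner, and corner) in~\eqref{eq:Lmatrix}, and then for each row that fails SDD, exhibit an explicit chain of nonzero off-diagonal matrix entries terminating at an SDD row. Throughout, positivity of the half-grid diffusion values $D_{i+1/2,j}$ and $D_{i,j+1/2}$ comes from hypothesis \textbf{H3}, and $\sigma_{i,j}\geq 0$ from \textbf{H4}; these are what make the inequalities close.

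For the WDD check I would go row by row. For an interior row $(i,j)\in I$, the diagonal equals $\sum_{(\tilde i,\tilde j)\sim(i,j)} \tfrac{D_{\cdot}}{\cdot}+\sigma_{i,j}$ while the sum of absolute off-diagonal values is exactly the same sum without $\sigma_{i,j}$, so the diagonal excess is $\sigma_{i,j}\geq 0$; the row is WDD, and SDD iff $\sigma_{i,j}>0$. For a non-corner boundary row $(i,j)\in B$, the diagonal reads $1+\ell\sum_{I\ni(\tilde i,\tilde j)\sim(i,j)} \tfrac{D_{\cdot}}{\cdot}$, and by the definition in~\eqref{eq:Lmatrix} the off-diagonals only hit those same interior neighbors (the non-interior neighbors fall into the ``others'' clause and contribute $0$), with absolute sum $\ell\sum$, so the excess is exactly $1>0$ and the row is SDD. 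For a corner row $(i,j)\in B_c$ the identical cancellation with prefactor $\sqrt{2}\ell/2$ yields excess $1>0$ and SDD as well. Hence the only rows that are WDD but not SDD are interior rows at which $\sigma_{i,j}=0$.

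For the chain condition, fix such a row with $(i,j)\in I$ and $\sigma_{i,j}=0$, and set $k=\mathcal{I}(i,j)$. Observe that $\mathbf{L}_{\mathcal{I}(i,j),\mathcal{I}(i',j')}\neq 0$ for $(i',j')\neq(i,j)$ exactly when $(i',j')\sim(i,j)$, so the nonzero off-diagonal pattern of $\mathbf{L}$ coincides with the grid adjacency graph. I would walk along one coordinate direction to the boundary, taking the sequence $(i,j),(i+1,j),\ldots,(N_x-1,j),(N_x,j)$: consecutive grid points are adjacent, so the corresponding matrix entries are nonzero; the terminal point $(N_x,j)$ lies on the right boundary and, because $2\leq j\leq N_y-1$ (since $(i,j)$ is interior), belongs to $B$, whose row is SDD by the previous paragraph. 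This produces the chain required by the WCDD definition.

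The only nontrivial point is matching the diagonal and off-diagonal summations on the boundary blocks so that the excess collapses to exactly $1$; once this is in hand, the chain construction is essentially free because the sparsity pattern of $\mathbf{L}$ mirrors the grid graph, which is connected with nonempty boundary whenever $N_x,N_y\geq 3$.
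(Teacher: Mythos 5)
Your proposal is correct and follows essentially the same route as the paper: verify WDD by computing the diagonal excess (equivalently the row sum, which is $\sigma_{i,j}\geq 0$ on interior rows and exactly $1$ on boundary and corner rows, making the latter SDD), then connect any non-SDD interior row to an SDD boundary row through a chain of nonzero entries along grid adjacencies. The only cosmetic difference is that you construct the chain explicitly by walking along a coordinate line, whereas the paper appeals abstractly to connectivity of the grid graph.
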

\begin{proof}
First, we show $\mathbf{L}$ is WDD. As $D>0$, $\sigma_a\geq0$ everywhere, all the off-diagonal terms (see Row 2, 4, 6, 7 in \eqref{eq:Lmatrix}) are non-positive and all the diagonal terms (see Row 1, 3, 5 in \eqref{eq:Lmatrix}) are non-negative. 
It suffices to show that
$$
\mathbf{L}_{\mathcal{I}(i,j),\mathcal{I}(i,j)} \geq \sum_{(i',j') \neq (i,j)} -\mathbf{L}_{\mathcal{I}(i,j),\mathcal{I}(i',j')}.
$$
Move all the terms in this inequality to the left side. It suffices to show that any row sum of $\mathbf{L}$ is non-negative. This is obvious from the definition of $\mathbf{L}$ in~\eqref{eq:Lmatrix}, where the row sum of the $\mathcal{I}(i,j)$-th row is $\sigma_{i,j}$ when $(i,j)\in I$, and the row sum of the $\mathcal{I}(i,j)$-th row is $1$ when $(i,j)\in B\cup B_c$. This proves that $\mathbf{L}$ is WDD. Moreover, the analysis shows that the $\mathcal{I}(i,j)$-th row is SDD when $(i,j)\in B\cup B_c$.

Next, we show the chain condition. If the $\mathcal{I}(i,j)$-th row is not SDD, then $(i,j)\in I$. As the finite difference grid is connected, there exist $(i_1,j_1), \dots, (i_p,j_p)$ such that $(i_p,j_p)\in B\cup B_c$ and $(i,j) \sim (i_1,j_1) \sim \dots \sim (i_p,j_p)$. Notice that the definition of $\mathbf{L}$ has the property that $\mathbf{L}_{\mathcal{I}(i,j),\mathcal{I}(i',j')} <0$ for $(i,j)\sim (i',j')$ (see Row 2,4,6 in~\eqref{eq:Lmatrix}), we conclude the entries $\mathbf{L}_{\mathcal{I}(i,j),\mathcal{I}(i_1,j_1)}$, $\dots$, $\mathbf{L}_{\mathcal{I}(i_{p-1},j_{p-1}),\mathcal{I}(i_p,j_p)}$ are all negative, and the row $\mathbf{L}_{\mathcal{I}(i_p,j_p),:}$ is SDD since $(i_p,j_p)\in B\cup B_c$.
\end{proof}

\begin{prop}[{\cite{shivakumar1974sufficient}}]
WCDD matrices are invertible.
\end{prop}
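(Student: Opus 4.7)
The plan is a proof by contradiction. Suppose $A$ is WCDD but singular; then $Ax=0$ for some nonzero $x$, which we normalize so that $\|x\|_\infty=1$. Let $S=\{k:|x_k|=1\}$, which is nonempty by the choice of normalization. The goal is to show that $S$ must contain an index whose row is SDD, and then to show that the same index cannot possibly belong to $S$, yielding a contradiction.

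For the first observation, fix $k\in S$ and extract the $k$-th row of $Ax=0$, namely $A_{kk}x_k=-\sum_{l\neq k}A_{kl}x_l$. Taking absolute values gives
\[
|A_{kk}|=|A_{kk}||x_k|\leq\sum_{l\neq k}|A_{kl}||x_l|\leq\sum_{l\neq k}|A_{kl}|.
\]
Since $A$ is WDD, the leftmost and rightmost quantities coincide, so both inequalities must be equalities. The equality on the left forces row $k$ to fail to be SDD; the equality on the right forces $|x_l|=1$ whenever $A_{kl}\neq 0$. In other words, membership in $S$ has two consequences: row $k$ is not SDD, and every ``neighbor'' of $k$ (i.e., every $l\neq k$ with $A_{kl}\neq 0$) is itself in $S$.

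To complete the contradiction, pick any $k\in S$. Because row $k$ is not SDD, the WCDD chain condition supplies indices $k_1,k_2,\ldots,k_p$ with $A_{k\,k_1},A_{k_1 k_2},\ldots,A_{k_{p-1}k_p}$ all nonzero and with the chain terminating at an SDD row. Applying the neighbor-propagation property repeatedly along the chain shows $k_1\in S$, then $k_2\in S$, and by induction $k_p\in S$. But then the very first consequence applied to $k_p$ says that row $k_p$ is not SDD, contradicting the construction of the chain. Therefore no nontrivial $x$ can satisfy $Ax=0$, and $A$ is invertible.

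The substantive content is entirely in the equality-case analysis of the WDD inequality, which is routine; the main care needed is in using the chain condition exactly as stated to propagate the set $S$ along a walk in the directed graph of nonzero off-diagonal entries. There is no analytical obstacle — the argument is purely combinatorial once the two consequences of membership in $S$ are isolated.
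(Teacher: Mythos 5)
Your argument is correct and complete: the equality-case analysis of the weak diagonal dominance inequality at a coordinate of maximal modulus, followed by propagation of the set $S$ along the chain guaranteed by the WCDD condition until it collides with an SDD row, is the standard proof of this fact (essentially the one in the cited reference of Shivakumar and Chew). The paper itself offers no proof and simply delegates to the citation, so there is nothing to compare against; the only cosmetic discrepancy is that the paper's chain condition names the terminal SDD row $l$ after the intermediate indices $k_1,\dots,k_p$, whereas you let $k_p$ itself be the SDD endpoint, which changes nothing in the argument.
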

As a result, the discretized forward problem admits a unique solution $\bm{\phi}_0 = \mathbf{L}^{-1}\mathbf{s}$.

\subsubsection{Discretization of the Adjoint Problem.}
The adjoint problem\eqref{eq:aDE}, \eqref{eq:aDEbc} takes a similar form as the forward problem, except that the source $g$ is imposed on the boundary. Therefore, the adjoint problem can be discretized likewise to yield a linear system
\[\mathbf{L}\bm{\psi} = \mathbf{g}\]
where $\mathbf{L}$ is the same finite difference matrix defined in~\eqref{eq:Lmatrix}, $\bm{\psi}$ consists of the vectorized values of the adjoint solution $\psi$ at black dots such that $\bm{\psi}_{\mathcal{I}(i,j)} = \psi(x_i,y_j)$, and 
\begin{equation}
    \mathbf{g}_{\mathcal{I}(i,j)} =
    \begin{cases}
        0,&(i,j)\in I,\\
        g(x_i,y_j),&(i,j)\in B\cup B_c.
    \end{cases}
\end{equation}

\subsubsection{Discretization of the Internal Data Problem.}
It remains to discretize the internal data problem~\eqref{eq:Hpsi} along with the zero Robin boundary condition. This requires discretizing an operator of the form $D\nabla u\cdot\nabla v = D\nabla v \cdot\nabla u$. The staggered grid scheme gives
\begin{equation}
    \begin{aligned}
        &[D\nabla v\cdot\nabla u]_{i,j} \\
        \approx& \frac{[D\partial_xu\partial_xv]_{i+\frac{1}{2},j} + [D\partial_xu\partial_xv]_{i-\frac{1}{2},j}}{2} + \frac{[D\partial_yu\partial_yv]_{i,j+\frac{1}{2}} + [D\partial_yu\partial_yv]_{i,j-\frac{1}{2}}}{2} \\
        \approx& \frac{[D\partial_xv]_{i+\frac{1}{2},j}[u_{i+1,j}-u_{i,j}] + [D\partial_xv]_{i-\frac{1}{2},j}[u_{i,j}-u_{i-1,j}]}{2\Delta x}\\
         &+ \frac{[D\partial_yv]_{i,j+\frac{1}{2}}[u_{i,j+1}-u_{i,j}] + [D\partial_yv]_{i,j-\frac{1}{2}}[u_{i,j}-u_{i,j-1}]}{2\Delta y} \\
        =&\left[\frac{D_{i+\frac{1}{2},j}[v_{i+1,j}-v_{i,j}]}{2\Delta x^2}\right]u_{i+1,j} + \left[\frac{D_{i-\frac{1}{2},j}[v_{i-1,j}-v_{i,j}]}{2\Delta x^2}\right]u_{i-1,j}\\
         &+ \left[\frac{D_{i,j+\frac{1}{2}}[v_{i,j+1}-v_{i,j}]}{2\Delta y^2}\right]u_{i,j+1} + \left[\frac{D_{i,j-\frac{1}{2}}[v_{i,j-1}-v_{i,j}]}{2\Delta y^2}\right]u_{i,j-1}\\
         &-\left[\frac{D_{i+\frac{1}{2},j}[v_{i+1,j}-v_{i,j}]}{2\Delta x^2}+\frac{D_{i-\frac{1}{2},j}[v_{i-1,j}-v_{i,j}]}{2\Delta x^2}+\frac{D_{i,j+\frac{1}{2}}[v_{i,j+1}-v_{i,j}]}{2\Delta y^2}+\frac{D_{i,j-\frac{1}{2}}[v_{i,j-1}-v_{i,j}]}{2\Delta y^2}\right]u_{i,j},
    \end{aligned}
\end{equation}
The discretization of~\eqref{eq:Hpsi} becomes
\[\mathbf{A}_{\bm{\psi_0}}\bm{\phi_0} = \mathbf{h}_{\bm{\psi_0}}\]
where $\bm{\phi_0}$ consists of the vectorized values of the forward solution $\phi_0$ at black dots such that $\bm{\phi}_{0\mathcal{I}(i,j)} = \phi(x_i,y_j)$, and
\begin{equation}
    (\mathbf{A}_{\psi})_{\mathcal{I}(i,j),\mathcal{I}(i',j')} = 
    \begin{cases}
        -\sum_{(\tilde{i},\tilde{j})\sim(i,j)}\frac{D_{\frac{i+\tilde{i}}{2},\frac{j+\tilde{j}}{2}}[\psi_{i,j}+\frac{2\gamma-1}{2}[\psi_{\tilde{i},\tilde{j}}-\psi_{i,j}]]}{|i-\tilde{i}|\Delta x^2 + |j-\tilde{j}|\Delta y^2} + 2\gamma\sigma_{i,j}\psi_{i,j},&(i',j') = (i,j),\,(i,j)\in I\\
        \frac{D_{\frac{i+i'}{2},\frac{j+j'}{2}}[\psi_{i,j}+\frac{2\gamma-1}{2}[\psi_{i',j'}-\psi_{i,j}]]}{|i-i'|\Delta x^2 + |j-j'|\Delta y^2},&(i',j') \sim (i,j),\,(i,j)\in I\\
        1+\ell\sum_{I\ni(\tilde{i},\tilde{j})\sim(i,j)}\frac{D_{\frac{i+\tilde{i}}{2},\frac{j+\tilde{j}}{2}}}{|i-\tilde{i}|\Delta x + |j-\tilde{j}|\Delta y},& (i',j') = (i,j),\,(i,j)\in B\\
        -\ell\frac{D_{\frac{i+i'}{2},\frac{j+j'}{2}}}{|i-i'|\Delta x + |j-j'|\Delta y},& I\ni(i',j') \sim (i,j)\in B,\\
        1+\frac{\sqrt{2}\ell}{2}\sum_{(\tilde{i},\tilde{j})\sim(i,j)}\frac{D_{\frac{i+\tilde{i}}{2},\frac{j+\tilde{j}}{2}}}{|i-\tilde{i}|\Delta x + |j-\tilde{j}|\Delta y},& (i',j') = (i,j),\,(i,j)\in B_c,\\
        -\frac{\sqrt{2}\ell}{2}\frac{D_{\frac{i+i'}{2},\frac{j+j'}{2}}}{|i-i'|\Delta x + |j-j'|\Delta y},& (i',j') \sim (i,j),\,(i,j)\in B_c,\\
        0&\text{others}        
    \end{cases}
\end{equation}

\begin{equation}
    (\mathbf{h}_{\psi})_{\mathcal{I}(i,j)} =
    \begin{cases}
        (H_\psi)_{i,j},&(i,j)\in I,\\
        0,&(i,j)\in B\cup B_c.
    \end{cases}
\end{equation}

\subsubsection{Discrete Uncertainty Quantification Estimate.}
Parallel to Theorem~\ref{thm:continuousUQ}, we derive the following UQ estimate for the discretized model. Note that the uncertainties of the optical parameters $(D,\sigma_a)$ are implicitly encoded in the difference $\tilde{\mathbf{L}}-\mathbf{L}$ and $\tilde{\mathbf{A}}_{\tilde{\bm{\phi_0}}}-\mathbf{A}_{\bm{\phi_0}}$.

\begin{thm} \label{thm:discreteUQ}
Suppose $0$ is not an eigenvalue of $\mathbf{A}_{\bm{\psi_0}}$ and $\tilde{\mathbf{A}}_{\bm{\tilde{\psi}_0}}$ for some $\bm{\psi_0}>0$ and $\bm{\tilde{\psi}_0} > 0$, then 
\begin{equation}
    \begin{aligned}
    \|\tilde{\mathbf{s}}-\mathbf{s}\|_2\leq&\|\mathbf{h}_{\bm{\phi_0}}\|_2(\|\mathbf{A}_{\psi_0}^{-1}\|_2\|\tilde{\mathbf{L}}-\mathbf{L}\|_2+\|\tilde{\mathbf{L}}\|_2\|\tilde{\mathbf{A}}_{\tilde{\bm{\phi_0}}}^{-1}\|_2\|\mathbf{A}_{\bm{\phi_0}}^{-1}\|_2\|\tilde{\mathbf{A}}_{\tilde{\bm{\phi_0}}}-\mathbf{A}_{\bm{\phi_0}}\|_2).
    \end{aligned}
\end{equation}
\end{thm}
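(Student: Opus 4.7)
The plan is to mimic the continuous argument of Theorem~\ref{thm:continuousUQ} but replace each elliptic regularity step by the finite-dimensional identity relating the inverses of two invertible matrices. Since the internal data is extracted from the boundary measurements and is therefore independent of the assumed optical parameters, I would treat the right-hand side vector $\mathbf{h}=\mathbf{h}_{\bm{\psi_0}}$ as the same for the ``true'' and ``perturbed'' reconstructions, so that the reconstructed discrete photon densities satisfy
\[
\mathbf{A}_{\bm{\psi_0}}\bm{\phi_0}=\mathbf{h},\qquad \tilde{\mathbf{A}}_{\tilde{\bm{\psi_0}}}\tilde{\bm{\phi_0}}=\mathbf{h},
\]
and the reconstructed source vectors are $\mathbf{s}=\mathbf{L}\bm{\phi_0}$ and $\tilde{\mathbf{s}}=\tilde{\mathbf{L}}\tilde{\bm{\phi_0}}$. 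Under the non-singularity hypothesis the two inverses $\mathbf{A}_{\bm{\psi_0}}^{-1}$ and $\tilde{\mathbf{A}}_{\tilde{\bm{\psi_0}}}^{-1}$ exist, so both $\bm{\phi_0}$ and $\tilde{\bm{\phi_0}}$ are well defined.

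The first step is the add-and-subtract decomposition
\[
\tilde{\mathbf{s}}-\mathbf{s}=\tilde{\mathbf{L}}\tilde{\bm{\phi_0}}-\mathbf{L}\bm{\phi_0}=(\tilde{\mathbf{L}}-\mathbf{L})\bm{\phi_0}+\tilde{\mathbf{L}}(\tilde{\bm{\phi_0}}-\bm{\phi_0}),
\]
followed by the triangle inequality and the submultiplicativity of the induced $2$-norm to obtain
\[
\|\tilde{\mathbf{s}}-\mathbf{s}\|_2\leq \|\tilde{\mathbf{L}}-\mathbf{L}\|_2\,\|\bm{\phi_0}\|_2+\|\tilde{\mathbf{L}}\|_2\,\|\tilde{\bm{\phi_0}}-\bm{\phi_0}\|_2.
\]
The factor $\|\bm{\phi_0}\|_2$ is immediately controlled by $\|\mathbf{A}_{\bm{\psi_0}}^{-1}\|_2\|\mathbf{h}\|_2$ from the internal-data equation.

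The second step handles the discrepancy $\tilde{\bm{\phi_0}}-\bm{\phi_0}$ through the standard resolvent identity
\[
\tilde{\mathbf{A}}_{\tilde{\bm{\psi_0}}}^{-1}-\mathbf{A}_{\bm{\psi_0}}^{-1}=\tilde{\mathbf{A}}_{\tilde{\bm{\psi_0}}}^{-1}\bigl(\mathbf{A}_{\bm{\psi_0}}-\tilde{\mathbf{A}}_{\tilde{\bm{\psi_0}}}\bigr)\mathbf{A}_{\bm{\psi_0}}^{-1},
\]
so that applying both sides to $\mathbf{h}$ and taking norms gives
\[
\|\tilde{\bm{\phi_0}}-\bm{\phi_0}\|_2\leq \|\tilde{\mathbf{A}}_{\tilde{\bm{\psi_0}}}^{-1}\|_2\,\|\mathbf{A}_{\bm{\psi_0}}^{-1}\|_2\,\|\tilde{\mathbf{A}}_{\tilde{\bm{\psi_0}}}-\mathbf{A}_{\bm{\psi_0}}\|_2\,\|\mathbf{h}\|_2.
\]
Substituting this and the bound on $\|\bm{\phi_0}\|_2$ into the previous display and factoring $\|\mathbf{h}\|_2$ out yields exactly the claimed estimate.

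There is essentially no analytic obstruction here; the proof is purely linear algebra, and the only points requiring care are (i) justifying that $\bm{\phi_0}$ and $\tilde{\bm{\phi_0}}$ come from a \emph{common} internal-data vector $\mathbf{h}$, which is precisely the discrete counterpart of the observation made at the beginning of Section~\ref{sec:continuous} that the extraction of $H_\psi$ does not invoke the reconstructed $(\tilde{D},\tilde{\sigma}_a)$, and (ii) ensuring that the chosen positive discrete adjoint vectors $\bm{\psi_0}>0$ and $\tilde{\bm{\psi_0}}>0$ make the matrices $\mathbf{A}_{\bm{\psi_0}}$ and $\tilde{\mathbf{A}}_{\tilde{\bm{\psi_0}}}$ invertible, which is granted by hypothesis. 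All the remaining steps (triangle inequality, submultiplicativity, resolvent identity) are routine.
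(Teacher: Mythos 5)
Your proposal is correct and follows essentially the same route as the paper: the paper writes $\tilde{\mathbf{s}}-\mathbf{s}=(\tilde{\mathbf{L}}\tilde{\mathbf{A}}^{-1}-\mathbf{L}\mathbf{A}^{-1})\mathbf{h}$, splits it exactly as you do into $(\tilde{\mathbf{L}}-\mathbf{L})\mathbf{A}^{-1}+\tilde{\mathbf{L}}(\tilde{\mathbf{A}}^{-1}-\mathbf{A}^{-1})$, and finishes with the same identity $A^{-1}-B^{-1}=A^{-1}(B-A)B^{-1}$. The only cosmetic difference is that you carry the vectors $\bm{\phi_0},\tilde{\bm{\phi_0}}$ through the estimate while the paper works with the operator norms of the matrix products directly; you have also, correctly, used the adjoint subscripts $\bm{\psi_0},\tilde{\bm{\psi}_0}$ consistently where the paper's statement mixes $\bm{\phi_0}$ and $\bm{\psi_0}$.
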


\begin{proof}
Under the assumption, the matrix $\mathbf{A}_{\bm{\psi_0}}$ is invertible for some $\bm{\psi_0}>0$. We can represent $\bm{\phi_0} = \mathbf{A}_{\bm{\psi_0}}^{-1}\mathbf{h}_{\psi_0}$, then $\mathbf{s}=\mathbf{L}\bm{\phi_0} = \mathbf{L}\mathbf{A}_{\bm{\psi_0}}^{-1}\mathbf{h}_{\bm{\psi_0}}$. Therefore,
\begin{equation}
    \begin{aligned}
    \|\tilde{\mathbf{s}}-\mathbf{s}\|_2=&\|(\tilde{\mathbf{L}}\tilde{\mathbf{A}}_{\tilde{\bm{\phi_0}}}^{-1}-\mathbf{L}\mathbf{A}_{\bm{\phi_0}}^{-1})\mathbf{h}_{\bm{\phi_0}}\|_2\\
    \leq&\|\tilde{\mathbf{L}}\tilde{\mathbf{A}}_{\tilde{\bm{\phi_0}}}^{-1}-\mathbf{L}\mathbf{A}_{\bm{\phi_0}}^{-1}\|_2\|\mathbf{h}_{\bm{\phi_0}}\|_2\\
    \leq&(\|(\tilde{\mathbf{L}}-\mathbf{L})\mathbf{A}_{\bm{\phi_0}}^{-1}\|_2+\|\tilde{\mathbf{L}}(\tilde{\mathbf{A}}_{\tilde{\bm{\phi_0}}}^{-1}-\mathbf{A}_{\bm{\phi_0}}^{-1})\|_2)\|\mathbf{h}_{\bm{\phi_0}}\|_2\\
    \leq&(\|\tilde{\mathbf{L}}-\mathbf{L}\|_2\|\mathbf{A}_{\bm{\phi_0}}^{-1}\|_2+\|\tilde{\mathbf{L}}\|_2\|\tilde{\mathbf{A}}_{\tilde{\bm{\phi_0}}}^{-1}-\mathbf{A}_{\bm{\phi_0}}^{-1}\|_2)\|\mathbf{h}_{\bm{\phi_0}}\|_2
    \end{aligned}
\end{equation}
where $\|\cdot\|_2$ denotes the vector/matrix 2-norm.
Using the relation $A^{-1}-B^{-1}=A^{-1}(B-A)B^{-1}$, we obtain the desired estimate.
\end{proof}

\subsection{Discretization with Anisotropic Diffusion Coefficients.}
When $D$ is anisotropic, i.e, a symmetric positive definition matrix-valued function, the operators $\nabla\cdot D\nabla$ and $D\nabla v\cdot\nabla$ can be discretized as follows
\begin{align*}
    [\nabla\cdot D\nabla u]_{i,j} &= \frac{[(D\nabla u)_1]_{i+\frac{1}{2},j} - [(D\nabla u)_1]_{i-\frac{1}{2},j}}{\Delta x} + \frac{[(D\nabla u)_2]_{i,j+\frac{1}{2}}-[(D\nabla u)_2]_{i,j-\frac{1}{2}}}{\Delta y}\\
    [D\nabla v\cdot\nabla u]_{i,j} &= \frac{[(D\nabla v)_1\partial_xu]_{i+\frac{1}{2},j} + [(D\nabla v)_2\partial_xu]_{i-\frac{1}{2},j}}{2} + \frac{[(D\nabla v)_2\partial_yu]_{i,j+\frac{1}{2}} + [(D\nabla v)_2\partial_yu]_{i,j-\frac{1}{2}}}{2}   
\end{align*}
where $(D\nabla u)_1$ (resp. $(D\nabla u)_2$) denotes the first (resp. second) component of the vector $D\nabla u$.
The discretization now differs from the isotropic case. This is because for an isotropic $D$
$$
(D\nabla u)_1 = D \partial_x u, \qquad
(D\nabla u)_2 = D \partial_y u
$$
which only requires $[\partial_x u]_{i+\frac{1}{2},j}$ and $[\partial_yu]_{i,j+\frac{1}{2}}$ in the staggered grid. However, for an anisotropic $D$:
$$
(D\nabla u)_1 = D_{11} \partial_x u + D_{12} \partial_y u, \qquad
(D\nabla u)_2 = D_{21} \partial_x u + D_{22} \partial_y u
$$
which requires two additional terms $[\partial_xu]_{i,j+\frac{1}{2}}$ and $[\partial_yu]_{i+\frac{1}{2},j}$. These additional terms can be discretized as follows:
\begin{align*}
    [\partial_yu]_{i+\frac{1}{2},j} &= \frac{[\partial_yu]_{i,j} + [\partial_yu]_{i+1,j}}{2} = \frac{u_{i+1,j+1}+u_{i,j+1}-u_{i,j-1}-u_{i+1,j-1}}{4\Delta y},\\
    [\partial_xu]_{i,j+\frac{1}{2}} &= \frac{[\partial_xu]_{i,j} + [\partial_xu]_{i,j+1}}{2} = \frac{u_{i+1,j+1}+u_{i+1,j}-u_{i-1,j}-u_{i-1,j+1}}{4\Delta x},    
\end{align*}
see~\cite{gunter2005modelling} for the detail. This discretization results in a matrix $\mathbf{L}$. The rest of the analysis is similar provided $\mathbf{L}$ is invertible, and we can obtain Theorem~\ref{thm:discreteUQ} as well.

\bigskip
\section{Numerical Experiment} \label{sec:numerics}

In this section, we demonstrate numerical experiments to validate the reconstruction procedure and quanfity the impact of inaccurate optical coefficients $(D,\sigma_a)$ to the source recovery. We will restrict the discussion in this section to isotropic $D$ for the ease of notations.

\subsection{Uncertainty Generation}
We will utilize the generalized Polynomial Chaos Expansion (PCE) to facilitate generation of uncertainty. PCE approximates a well-behaved random variable using a series of polynomials under certain probability distribution. Specifically, let $(X,\mathcal{F},\mathbb{P})$ be a probability space, and let $\xi(\omega)$ be a random variable (where $\omega \in X$ is a sample) with probability density function $p(t)$. 
Suppose a deterministic ground-truth $\mathfrak{u} = \mathfrak{u}(x)$ is given, then the uncertainty generated by PCE takes the form 
\begin{equation} \label{eq:PCE}
\mathfrak{u}(x,\xi(\omega))=\sum^\infty_{k=0}\mathfrak{u}_k(x)\Phi_k(\xi(\omega)), \qquad (x,\omega)\in \Omega\times X
\end{equation}
where $\mathfrak{u}_k(x)$'s are the coefficients, $\mathfrak{u}_0$ is the ground truth, $\Phi_0=1$, $\Phi_k$'s are orthogonal polynomials, that is,
\[\int_\mathbb{R} \Phi_i(t)\Phi_j(t)p(t)\dif t=\delta_{ij}.\]
For the numerical experiments, $\xi$ is chosen to be uniformly distributed on the sample space $X=[-1,1]$; $\Phi_k$'s are the Legendre polynomials on $[-1,1]$; the PCE is truncated at $k=K_c$. Then
\[ \mathbb{E}[\mathfrak{u}]=\mathfrak{u}_0,\qquad\textup{Var}[\mathfrak{u}]=\sum_{k=1}^{K_c}\mathfrak{u}_k^2.\]

In the subsequent numerical experiments, we inject uncertainties into the optical coefficients $(D,\sigma_a)$ based on the following process:
\begin{enumerate}[(1)]
    \item Generate the coefficients ${\mathfrak{u}_D}_k, {\mathfrak{u}_{\sigma_a}}_k$ using the truncated Fourier series in $x$:
$$
    \begin{aligned}
        {\mathfrak{u}_D}_k &= \sum_{\|\mathbf{n}\|_\infty=k}c_{1\mathbf{n}}\sin(\pi\mathbf{n}\cdot x)+c_{2\mathbf{n}}\cos(\pi\mathbf{n}\cdot x),\\
        {\mathfrak{u}_{\sigma_a}}_k &= \sum_{\|\mathbf{n}\|_\infty=k}c_{3\mathbf{n}}\sin(\pi\mathbf{n}\cdot x)+c_{4\mathbf{n}}\cos(\pi\mathbf{n}\cdot x).
    \end{aligned}    
$$
Here $\mathbf{n}\in\mathbb{Z}^n$, the Fourier coefficients $c_{1\mathbf{n}},c_{2\mathbf{n}},c_{3\mathbf{n}},c_{4\mathbf{n}}$ are independently chosen from the uniform distributions on $[-1,1]$. Once generated, they are fixed so that the coefficients ${\mathfrak{u}_D}_k, {\mathfrak{u}_{\sigma_a}}_k$ are deterministic.
    \item Randomly generate $\xi$ from the uniform distribution on $[-1,1]$, then construct the uncertainties ${\mathfrak{u}_D},\mathfrak{u}_{\sigma_a}$ according to~\eqref{eq:PCE} with $k=1,2,\dots,10$:
    \begin{align*}
        {\mathfrak{u}_D} &\coloneqq  \sum_{k=1}^{10}{\mathfrak{u}_D}_k\Phi_k(\xi(\omega)),\qquad{\mathfrak{u}_{\sigma_a}} \coloneqq \sum_{k=1}^{10}{\mathfrak{u}_{\sigma_a}}_k\Phi_k(\xi(\omega))
    \end{align*}
Note that $\mathbb{E}[{\mathfrak{u}_D}] = \mathbb{E}[\mathfrak{u}_{\sigma_a}] = 0$.
    \item Once the uncertainties are generated, we rescale the uncertainties based on prescribed relative uncertainty levels $e_D, e_{\sigma_a}$ to construct the optical coefficients with uncertainty $(\tilde{D},\tilde{\sigma}_a)$ as follows:
\begin{equation} \label{eq:Dsigmatilde}
    \begin{aligned}
    \tilde{D} & := D + \frac{\mathfrak{u}_De_D}{\|\mathfrak{u}_D\|_{H^1}}\|D\|_{H^1},\\
    \tilde{\sigma}_a & := \sigma_a + \frac{\mathfrak{u}_{\sigma_a}e_{\sigma_a}}{\|\mathfrak{u}_{\sigma_a}\|_{L^2}}\|\sigma_a\|_{L^2}.
\end{aligned}
\end{equation}
\end{enumerate}

The impact of the inaccuracy in the optical coefficients will be quantitatively measured by the relative standard deviation defined as follows:
\begin{equation} \label{eq:relstd}
\mathcal{E}_S\coloneqq\frac{\sqrt{\mathbb{E}[\|\tilde{S}-S\|^2_{L^2}]}}{\|S\|_{L^2}},\quad\mathcal{E}_D\coloneqq\frac{\sqrt{\mathbb{E}[\|\tilde{D}-D\|^2_{H^1}]}}{\|D\|_{H^1}},\quad\mathcal{E}_{\sigma_a}\coloneqq\frac{\sqrt{\mathbb{E}[\|\tilde{\sigma}_a-\sigma_a\|_{L^2}^2]}}{\|\sigma_a\|_{L^2}}.
\end{equation}
Note that $\mathcal{E}_D=e_D$ and $\mathcal{E}_{\sigma_a} = e_{\sigma_a}$ are precisely the relative uncertainty levels that are used to define $(\tilde{D},\tilde{\sigma}_a)$ in~\eqref{eq:Dsigmatilde}. This justifies that the relative standard deviation is a reasonable quantity to measure the uncertainty. In the following, we will specify various uncertainty levels $e_D, e_{\sigma_a}$ and plot $\mathcal{E}_S$ versus them, see Figure~\ref{fig:exp1_stability} and Figure~\ref{fig:exp2_stability}.

\subsection{Numerical Implementation.}
We choose the 2D computational domain $\Omega=[-1,1]^2$. The diffusion equation is solved using the staggered grid scheme outlined in Section~\ref{sec:discrete}. To avoid the inverse crime, the forward problem is solved on a fine mesh with step size $h=\frac{1}{200}$, while the inverse problem is solved on a coarse mesh with step size $h=\frac{1}{100}$ using re-sampled data. We numerically calculate the noise-free $\phi_0$ and $\psi_0$ using ground truth $S$ and $(D,\sigma_a)$, here we choose $\psi_0>0$ by solving \eqref{eq:aDE} with a positive Dirichlet boundary condition. The resulting Robin boundary condition is the corresponding $g$ in~\eqref{eq:aDEbc}. Once we have $\phi_0$ and $\psi_0$, we can calculate the internal data $H_{\psi_0}$ through~\eqref{eq:Hpsi}. 
Note that the internal data is derived from the boundary measurement, hence is independent of the uncertainty on the optical coefficients.

\textbf{Experiment 1.} In this experiment, we consider the case that the optical coefficients can be represented using low-frequency Fourier basis. We choose
\[D=\cos^2(x+2y)-3\sin^2(3x-4y)+5,\qquad\sigma_a=\cos^2(5x)+\sin^2(5y)+1,\]
and the source $S$ to be the Shepp-Logan phantom, see Figure~\ref{fig:exp1_coefficients}.

Using the ground-truth $(D,\sigma_a)$, we generate the uncertainties according to~\eqref{eq:Dsigmatilde} to obtain 1000 samples of inaccurate optical coefficients $(\tilde{D},\tilde{\sigma}_a)$. Set $\Delta D := \tilde{D} - D$ and $\Delta\sigma_a = \tilde{\sigma}_a - \sigma_a$.
We implemented the reconstruction procedure 1000 times to plot the distribution of $\|\Delta S\|_{L^2}$ versus $\|\Delta D\|_{H^1}$ and $\|\Delta \sigma_a\|_{L^2}$, see Figure~\ref{fig:exp1_distribution}. 
It is clear that for fixed $\|\Delta D\|_{H^1}$, $\|\Delta S\|_{L^2}$ is more concentrated compared to fixed $\|\Delta \sigma_a\|_{L^2}$, suggesting that the uncertainty in $\tilde{D}$ has larger impact to the reconstruction than the uncertainty in $\tilde{\sigma}_a$. Moreover, the distribution of the scatter plot suggests that $\|\Delta S\|_{L^2}$ is locally Lipschitz stable with respect to $\|\Delta D\|_{H^1}$ for small $\Delta D$, agreeing with the estimates in Theorem~\ref{thm:continuousUQ} and Theorem~\ref{thm:discreteUQ}.
One of the reconstructions is illustrated in Figure~\ref{fig:exp1_reconstruction}, and the average of the $1000$ reconstructed sources is illustrated in Figure~\ref{fig:exp1_average}. We see that the averaged $\tilde{S}$ is close to the ground truth $S$. This can be understood as follows. Let us view $S=\mathcal{S}[D,\sigma_a]$ as a nonlinear functional of $(D,\sigma_a)$. When small perturbations $(\delta D, \delta\sigma_a)$ are added, the response perturbation $\delta S \approx d\mathcal{S}(\delta D, \delta\sigma_a)$ depends almost linearly on $(\delta D, \delta\sigma_a)$ where $d\mathcal{S}$ is the Frech\'et derivative. Hence $\mathbb{E}[S] \approx d\mathcal{S}(\mathbb{E}[\delta D], \mathbb{E}[\delta\sigma_a]) = 0$.

To better understand the relations between $\mathcal{E}_S$ versus $\mathcal{E}_D$ (resp. $\mathcal{E}_S$ versus $\mathcal{E}_{\sigma_a}$), we take $\Delta\sigma_a=0$ (resp. $\Delta D=0$) and add $e_{D} = 2\%,4\%,6\%,8\%,10\%$ of random noise to $D$ (resp. $e_{\sigma_a} = 2\%,4\%,6\%,8\%,10\%$ of random noise to $\sigma_a$). The plots are shown in Figure~\ref{fig:exp1_stability}.
We observe that $\mathcal{E}_S$ depends linearly or superlinearly on $\mathcal{E}_D$ and $\mathcal{E}_{\sigma_a}$, and the same level of relative uncertainty on $D$ has larger impact than on $\sigma_a$.
Note that the plotted curves are nonlinear because the constant factors $C_{1ij}, C_2$ in Theorem~\ref{thm:continuousUQ} also depend on $(\tilde{D},\tilde{\sigma}_a)$.

\begin{remark}\label{remark:example}
If $X$ is a random variable and $f$ is a nonlinear function, it is generally not true that $\mathbb{E}f(X)\neq f(\mathbb{E}(X))$. For example, if we choose a uniformly distributed random variable $X\sim U\left(-1,1\right)$ and a nonlinear function $f_\alpha(x) := |x|^\alpha$ ($0<\alpha<1$). Then $\mathbb{E}[X] = 0$, hence $f_\alpha(\mathbb{E}[X]) = f_\alpha(0) = 0$. But 
$$
0< \mathbb{E}[f_\alpha(X)] = \frac{1}{2} \int^{1}_{-1} |x|^\alpha \,\dif x = \frac{1}{\alpha+1} < 1
$$
and $\mathbb{E}[f_\alpha(X)]$ monotonically increases to $1$ as $\alpha\to 0_+$.
\end{remark}

\begin{figure}[h]
    \centering
    \includegraphics[width=0.3\textwidth]{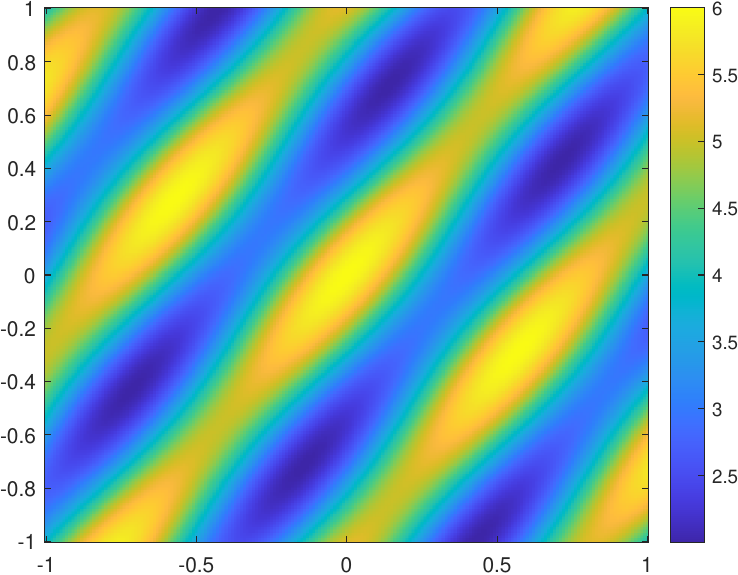}
    \includegraphics[width=0.3\textwidth]{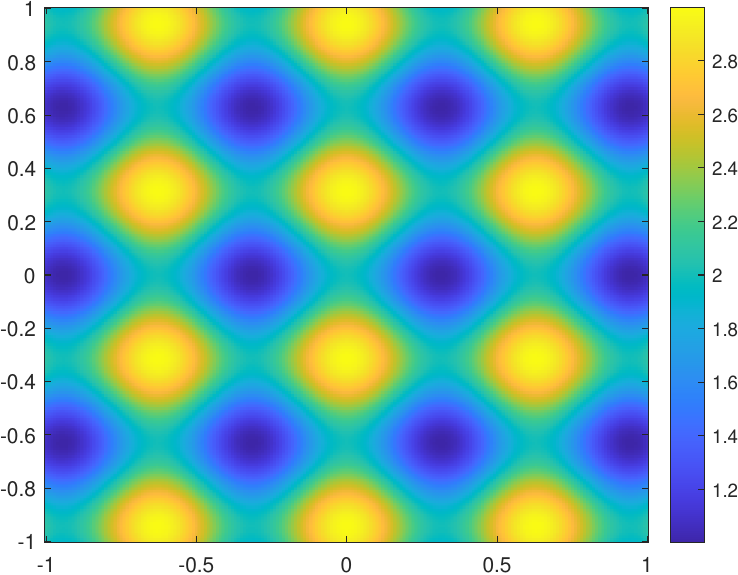}
    \includegraphics[width=0.3\textwidth]{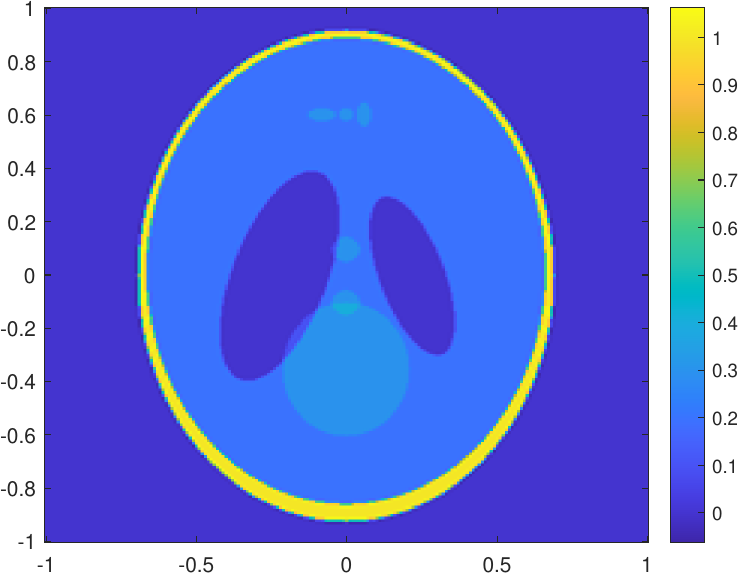}
    \caption{Left: Diffusion coefficient $D$. Middle: Absorption coefficient $\sigma_a$. Right: Shepp-Logan Source $S$.}
    \label{fig:exp1_coefficients}
\end{figure}

\begin{figure}[h]
\centering
\begin{subfigure}[b]{.6\textwidth}
\includegraphics[width=\textwidth]{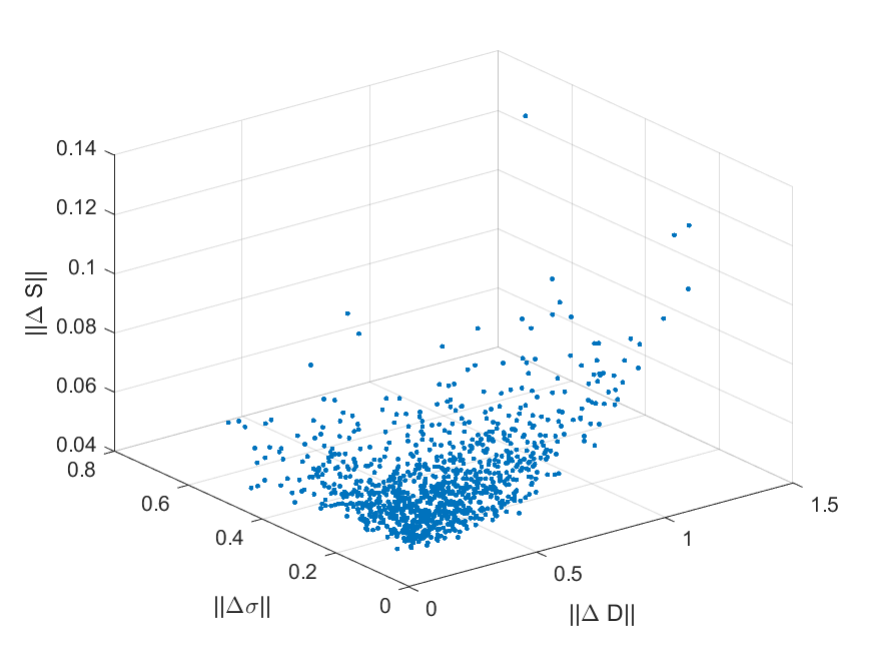}
\end{subfigure}\qquad
\begin{subfigure}[b]{.3\textwidth}
\includegraphics[width=\textwidth]{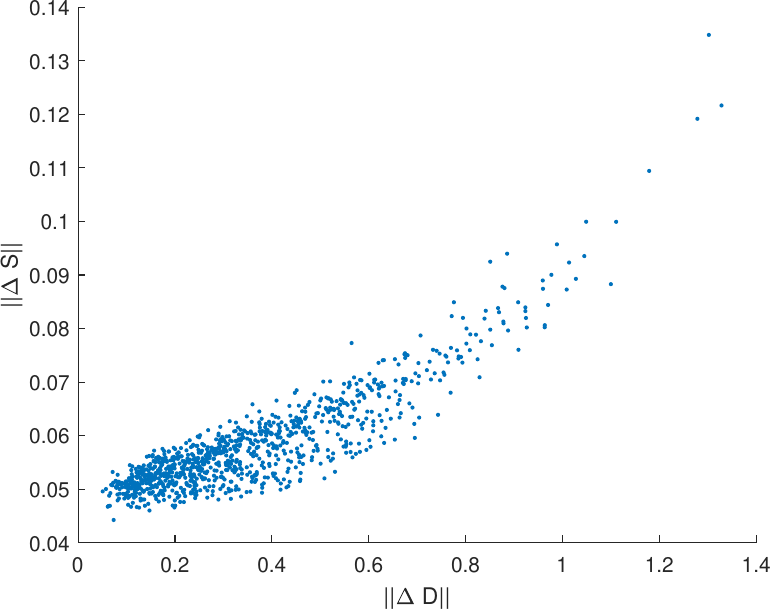}

\vspace{2ex}

\includegraphics[width=\textwidth]{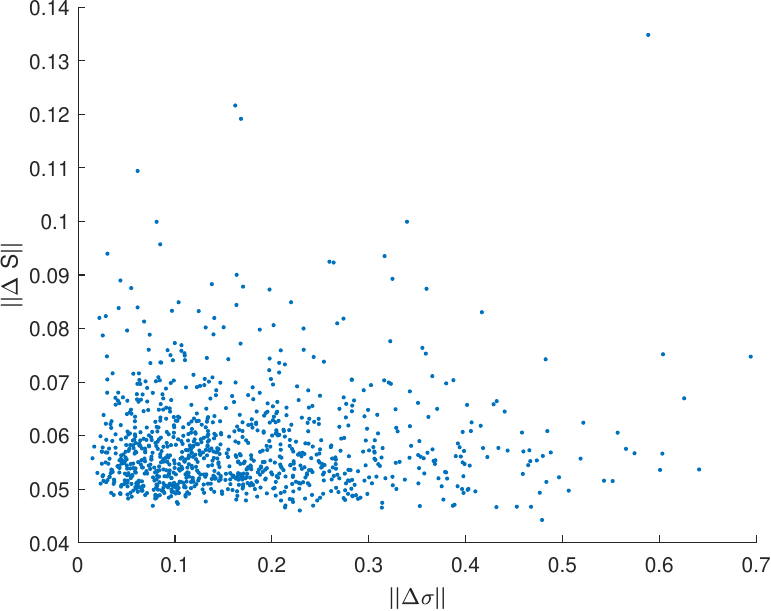}
\end{subfigure}
\caption{The distribution of the error with respect to the inaccuracies in optical coefficients}
\label{fig:exp1_distribution}
\end{figure}

\begin{figure}[h]
    \centering
    \includegraphics[width=0.4\textwidth]{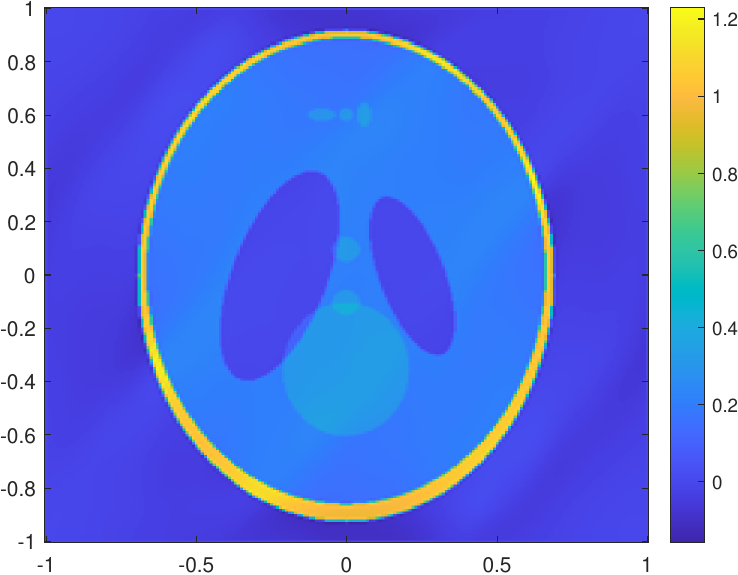}
    \includegraphics[width=0.4\textwidth]{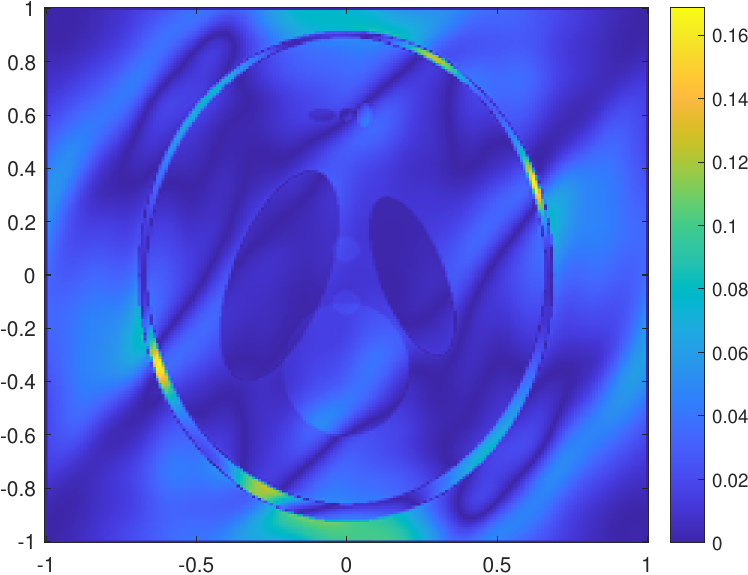}
    \caption{Reconstructed source $\tilde{S}$ and its error under $10\%$ Gaussian random noise.}
    \label{fig:exp1_reconstruction}
    \includegraphics[width=0.4\textwidth]{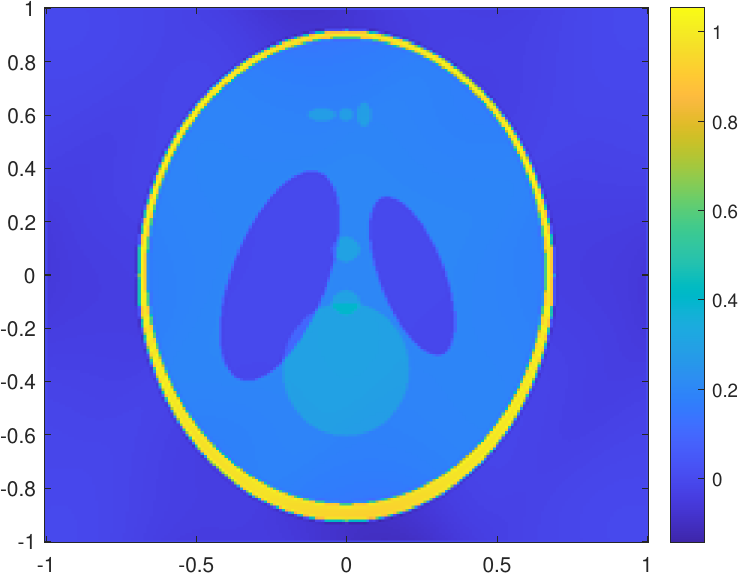}
    \includegraphics[width=0.4\textwidth]{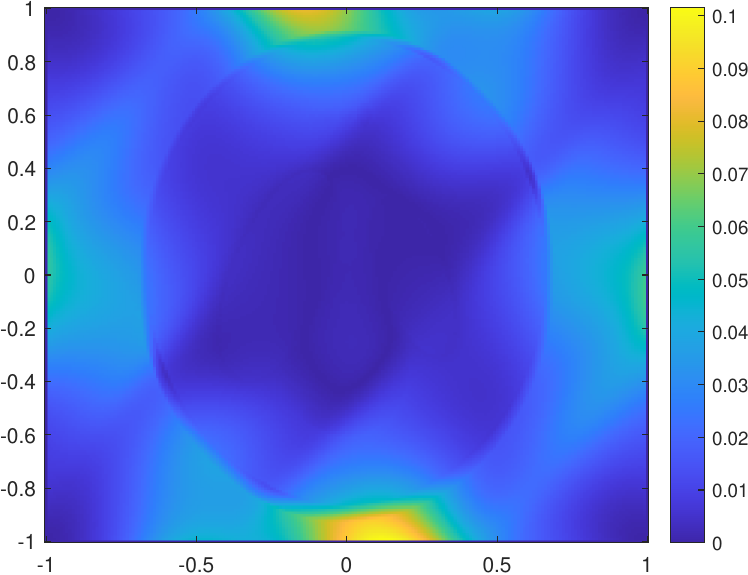}
    \caption{Averaged reconstructed source $\tilde{S}$ and its error under $10\%$ Gaussian random noise.}
    \label{fig:exp1_average}
    \includegraphics[width=0.4\textwidth]{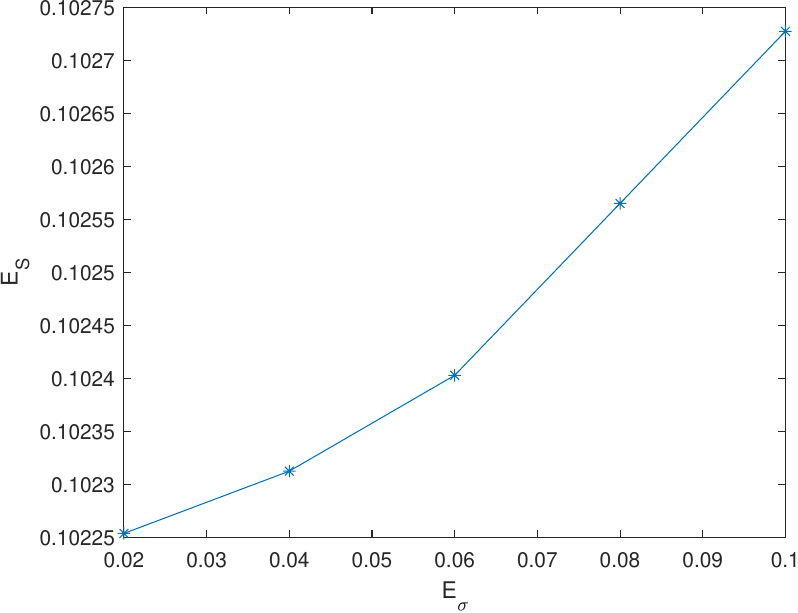}
    \includegraphics[width=0.4\textwidth]{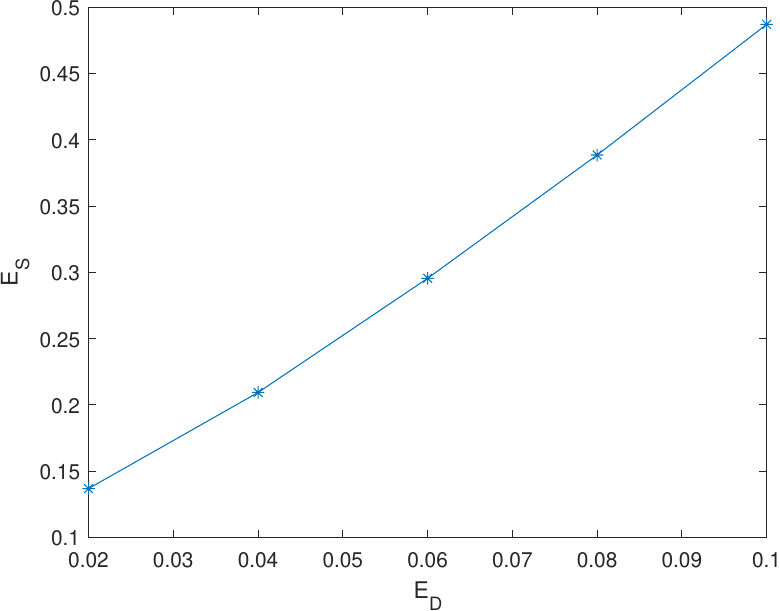}
    \caption{Left: $\mathcal{E}_S$ versus $\mathcal{E}_{\sigma_a}$. Right: $\mathcal{E}_S$ versus $\mathcal{E}_{D}$. }
    \label{fig:exp1_stability}
\end{figure}

\subsubsection{Experiment 2:} In this experiment, we consider the case that the optical coefficients can not be represented using the low frequency Fourier basis. We choose
\[D=3-\max\{|x|,|y|\},\qquad\sigma_a=\frac{3}{2}-\frac{1}{2}\text{sgn}\left(x^2+y^2-\frac{4}{5}\right),\]
and we choose the source $S$ to be the Shepp-Logan phantom, see Figure~\ref{fig:exp2_coefficients}. We choose the relative uncertainty level at $10\%$ and run $1000$ reconstructions to plot the distribution of $\|\tilde{S}-S\|_{L^2}$ versus $\|\tilde{D}-D\|_{H^1}$ and $\|\tilde{\sigma}_a-\sigma_a\|_{L^2}$, see Figure~\ref{fig:exp2_distribution}. One of the reconstructions is illustrated in Figure~\ref{fig:exp2_reconstruction}, and the average of $1000$ reconstructed sources is illustrated in Figure~\ref{fig:exp2_average}. For the relation between the relative standard deviations, we fix $D$ and $\sigma_a$ respectively and add $2\%,4\%,6\%,8\%,10\%$ Gaussian random noise to another optical coefficient. 
The relations are shown in Figure~\ref{fig:exp2_stability}.
Again, we observe that uncertainties in $D$ have larger impact to the reconstruction than that in $\sigma_a$. We also observe that the averaging process reduces the uncertainty in the reconstruction. 
The impact $\mathcal{E}_S$ also depends linearly or superlinearly on $\mathcal{E}_D$ and $\mathcal{E}_{\sigma_a}$.

\begin{figure}[h]
    \centering
    \includegraphics[width=0.3\textwidth]{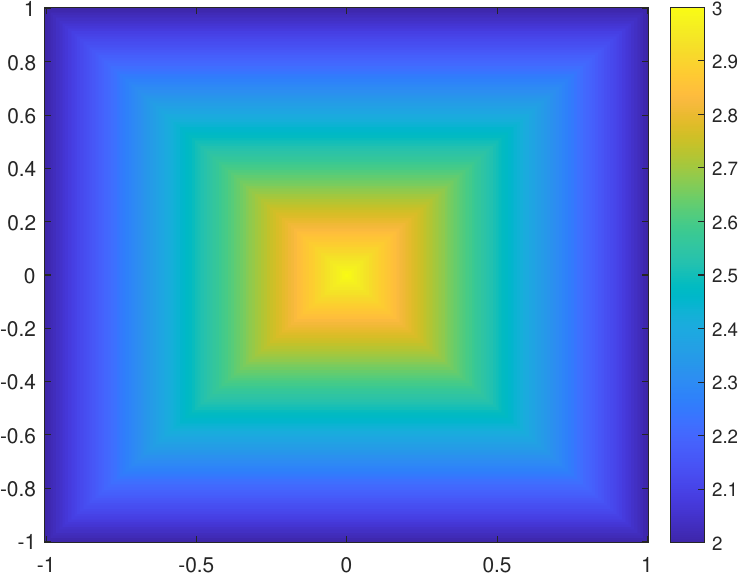}
    \includegraphics[width=0.3\textwidth]{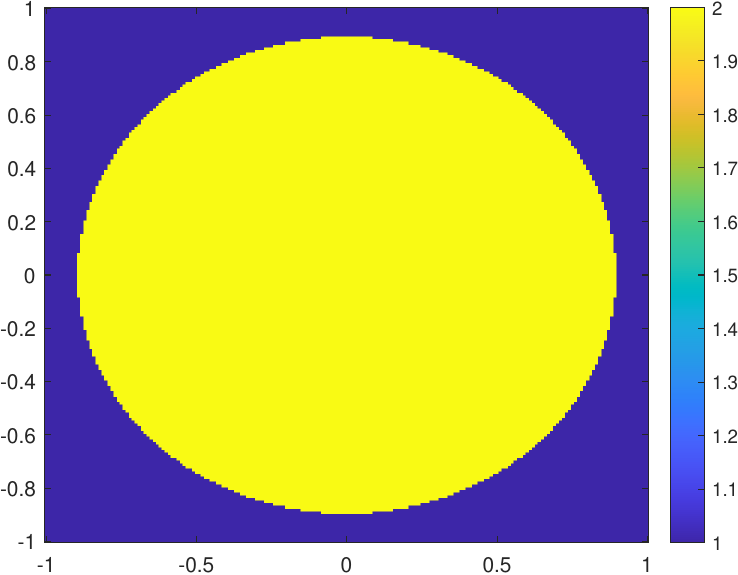}
    \includegraphics[width=0.3\textwidth]{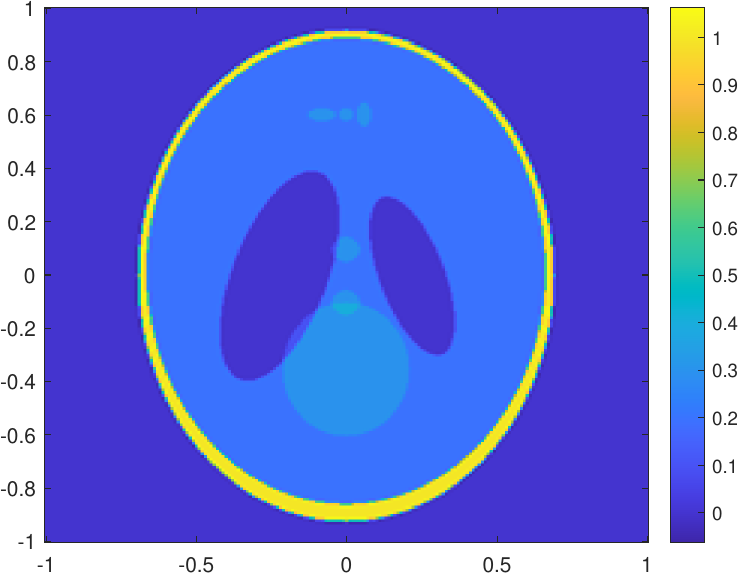}
    \caption{Left: Diffusion coefficient $D$. Middle: Absorption coefficient $\sigma_a$. Right: Shepp-Logan Source}
    \label{fig:exp2_coefficients}
\end{figure}

\begin{figure}[h]
\centering
\begin{subfigure}[b]{.6\textwidth}
\includegraphics[width=\textwidth]{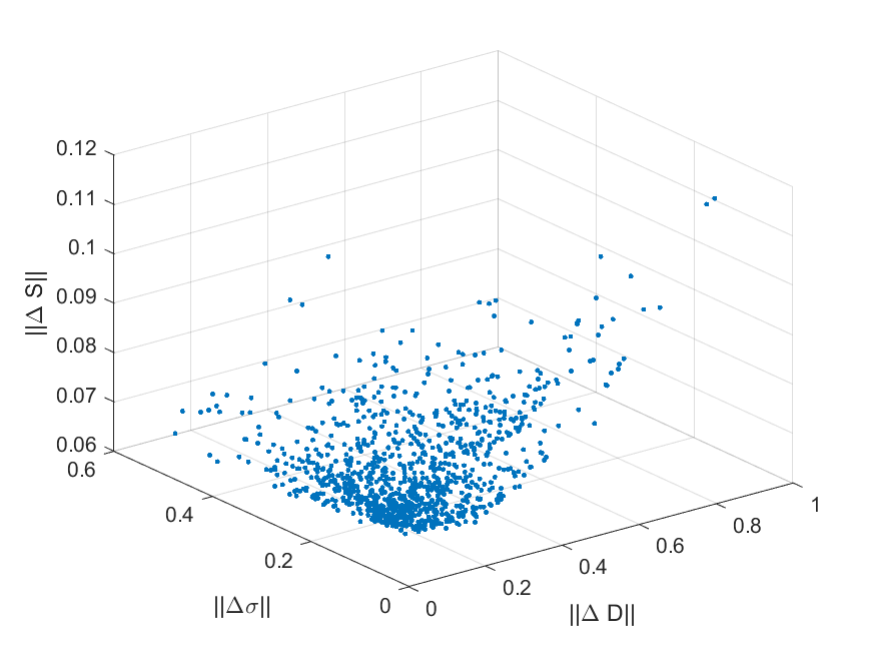}
\end{subfigure}\qquad
\begin{subfigure}[b]{.3\textwidth}
\includegraphics[width=\textwidth]{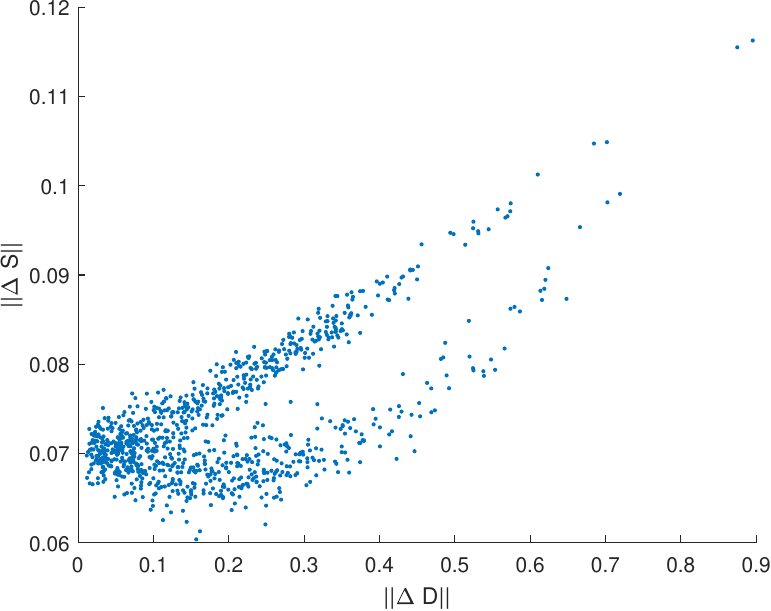}

\vspace{2ex}

\includegraphics[width=\textwidth]{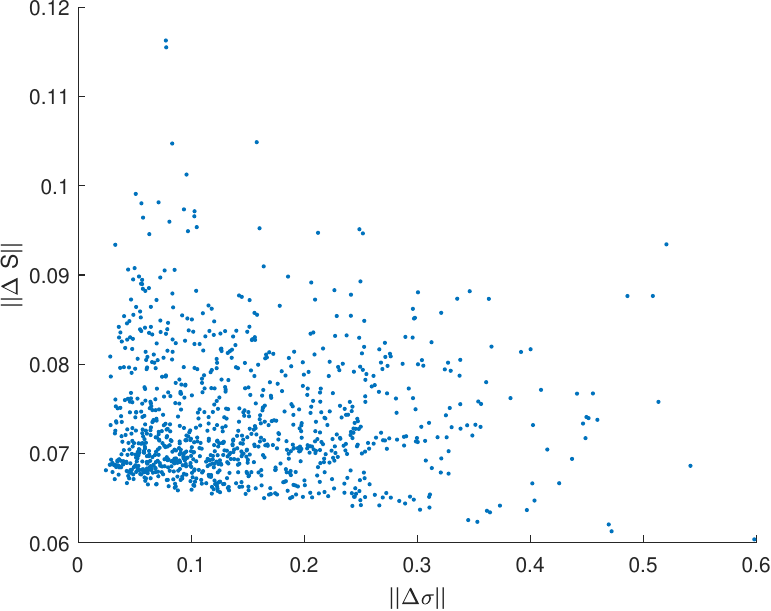}
\end{subfigure}
\caption{The distribution of the error $\|\Delta S\|$ with respect to the inaccuracies $\|\Delta D\|$ and $\|\Delta \sigma_a\|$.}
\label{fig:exp2_distribution}
\end{figure}

\begin{figure}[h]
    \centering
    \includegraphics[width=0.4\textwidth]{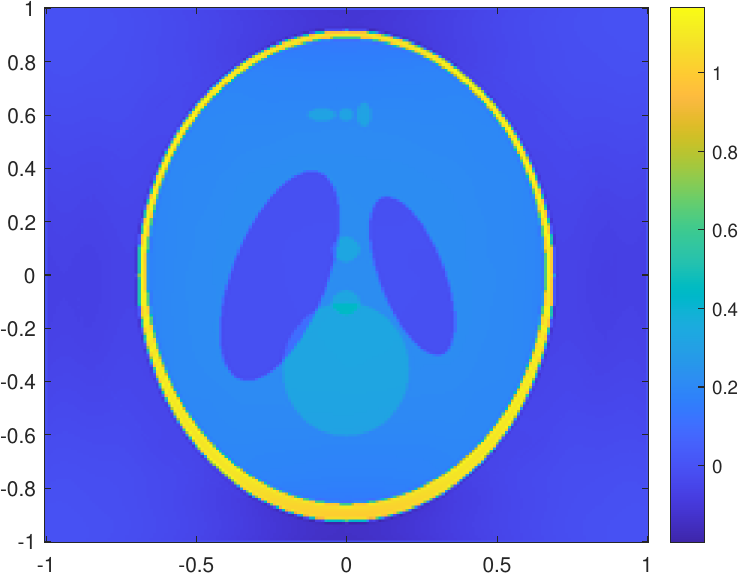}
    \includegraphics[width=0.4\textwidth]{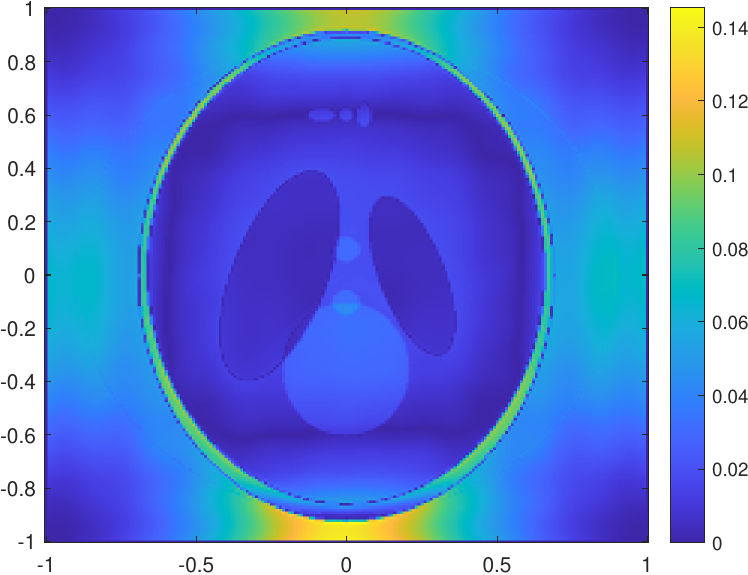}
    \caption{Reconstructed source $\tilde{S}$ and its error under $10\%$ Gaussian random noise.}
    \label{fig:exp2_reconstruction}

    \includegraphics[width=0.4\textwidth]{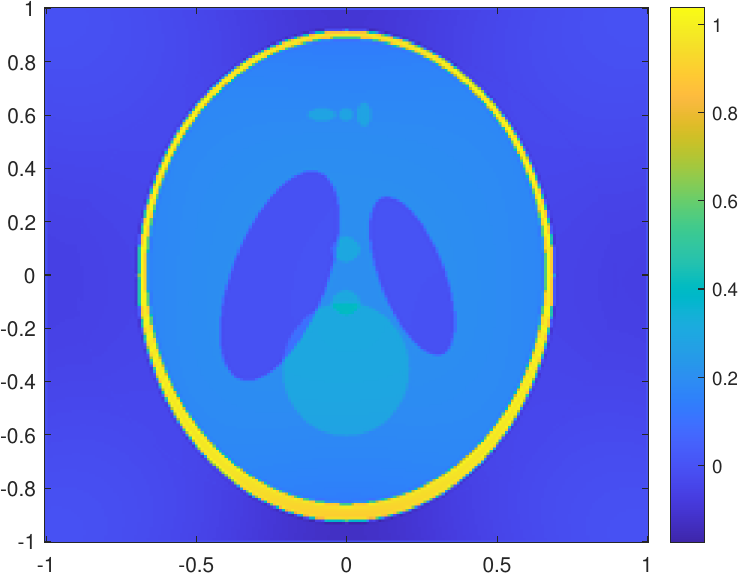}
    \includegraphics[width=0.4\textwidth]{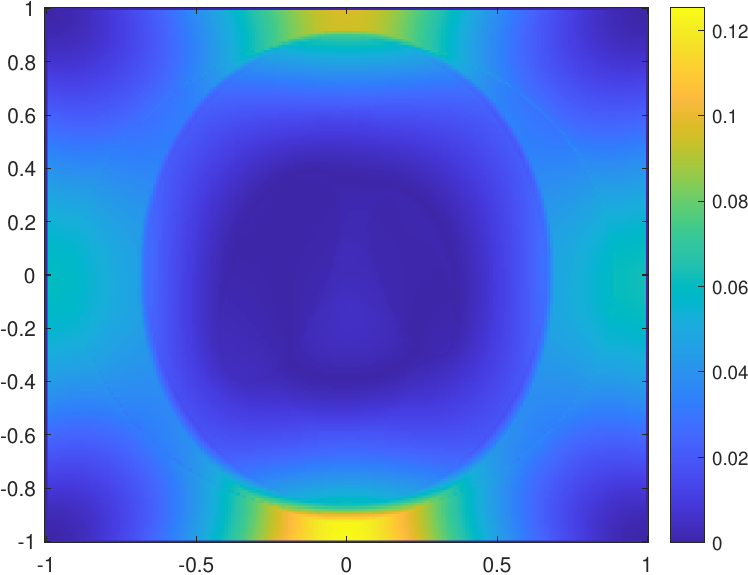}
    \caption{Averaged reconstructed source $\tilde{S}$ and its error under $10\%$ Gaussian random noise.}
    \label{fig:exp2_average}
    
    \includegraphics[width=0.4\textwidth]{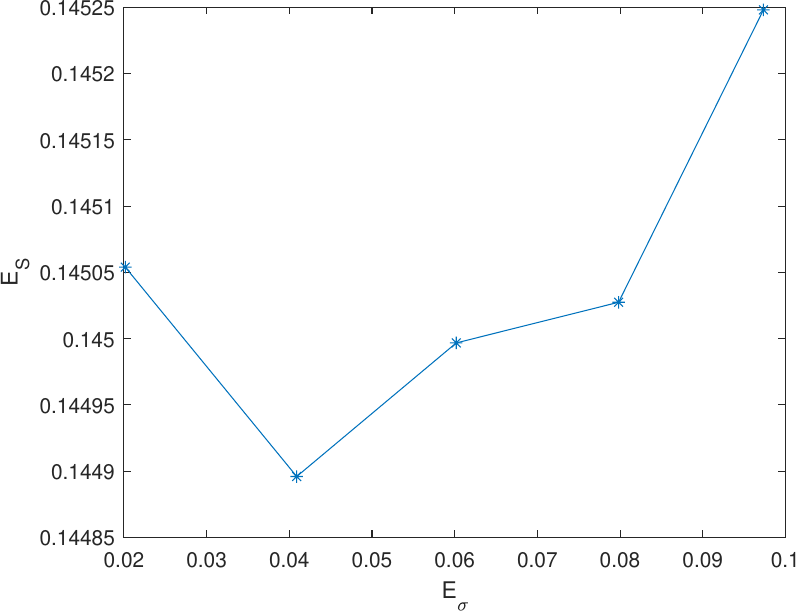}
    \includegraphics[width=0.4\textwidth]{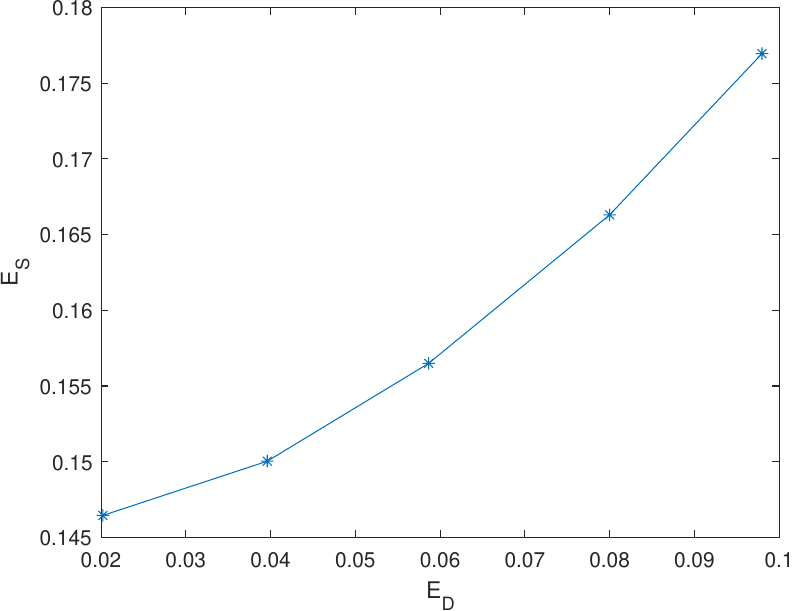}
    \caption{Left: $\mathcal{E}_S$ versus $\mathcal{E}_{\sigma_a}$. Right: $\mathcal{E}_S$ versus $\mathcal{E}_{D}$.}
    \label{fig:exp2_stability}
\end{figure}

\begin{remark} 
In Fig.~\ref{fig:exp2_distribution}, the plot $\|\Delta S\|$ versus $\|\Delta D\|$ has two branches. This is because the plot shows the relation between the norms. As a simple example, 
let $y=(x+1)^2$, $x\in\mathbb{R}$. The same branches appear if we plot $|y|$ versus $|x|$.
\end{remark}

\bibliographystyle{plain} 
\bibliography{main.bib}

\end{document}